\newcommand*\Let[2]{\State #1 $\gets$ #2}
\DeclareMathOperator\erfc{erfc}
\title{Solving Elliptic Equations with Brownian Motion: \\ Bias Reduction and Temporal Difference Learning}
\date{}
\newtheorem{theorem}{Theorem}
\newtheorem{corollary}{Corollary}
\newcommand{\E}{\mathbb{E}}
\newcommand{\p}{\mathbb{P}}
\newcommand{\dO}{\ensuremath{\rho_{\partial \Omega}}}
\newcommand{\ExitCondition}{\text{ExitCondition}}
\newcommand{\gEstimate}{\text{gEstimate}}
\newcommand{\fEstimate}{\text{fEstimate}}
\newcommand{\tEstimate}{\text{tEstimate}}
\newcommand{\xEstimate}{\text{xEstimate}}
\newcommand{\Proj}{\text{Proj}}
\newcommand{\1}{\mathbbm{1}}
\global\long\def\d{\text{d}}
\author{
  Cameron Martin\\
  \texttt{cameron.martin@mail.utoronto.ca} \\
  \AND
  Hongyuan Zhang\\
  \texttt{hongyuaz@andrew.cmu.edu} \\
  \AND
   Julia Costacurta\\
  \texttt{jcostac@stanford.edu} \\
  \AND
  Mihai Nica\\
  \texttt{nicam@uoguelph.ca} \\
 \AND
  Adam R Stinchcombe\\
  \texttt{stinch@math.toronto.edu}\\
}
\begin{document}
\date{}
\maketitle

\begin{abstract}
The Feynman-Kac formula provides a way to understand solutions to elliptic partial differential equations in terms of expectations of continuous time Markov processes. This connection allows for the creation of numerical schemes for solutions based on samples of these Markov processes which have advantages over traditional numerical methods in some cases. However, na\"{i}ve numerical implementations suffer from statistical bias and sampling error. We present methods to discretize the stochastic process appearing in the Feynman-Kac formula that reduce the bias of the numerical scheme. We also propose using temporal difference learning to assemble information from random samples in a way that is more efficient than the traditional Monte Carlo method.
\end{abstract}

\keywords{Feynman-Kac formula \and Monte Carlo \and Temporal Difference Learning \and Brownian Motion \and Euler-Maruyama}

\section{Introduction}

The ability to compute numerical solutions to partial differential equations (PDEs) has proven tremendously important for applications in science and engineering. Many popular numerical schemes, like finite element or finite difference methods, rely on discretizing space and reducing the PDE to a finite dimensional system that can be solved. This works well in many situations. However, in high dimensions and/or in domains that are highly irregular, discretizing space (particularly near the boundary) can be an error-prone and computationally expensive operation. It follows that a promising alternative would forgo spatial discretization altogether. 

We first present a method based on the Feynman-Kac formula to avoid the problem of discretizing space. This famous formula, more traditionally used for theoretical results, connects solutions of PDEs to continuous time Markov processes. The advantage of using this formula is that in our numerical method, we discretize the \emph{time} variable of the Markov processes, but leave the spatial dimensions continuous (up to machine precision). Many authors have proposed methods inspired by the Feynman-Kac formula to solve differential equations, all using some variant of the traditional Monte Carlo method and the walking on spheres technique for simulating Brownian motion~\citep{booth_1981,booth_1982,delaurentis_romero_1990,buchmann2003dirichlet,hwang_mascagni_given_2003,janson2006blackscholes,pauli_gantner_arbenz_adelmann_2015,zhou_cai_2016,zhou_cai_2019}. Their implementations encounter two main difficulties: statistical bias and sampling error.

Statistical bias is a primary challenge with Monte Carlo methods. The manner in which the time variable is discretized obscures some of the underlying behaviour of a sample path of Brownian motion. In particular, a na\"{i}ve discretization results in a systematic overestimation of exit times which introduces statistical bias in the computed solution. Many authors have investigated ways of reducing or eliminating this bias in exit time estimation~\citep{broadie_glasserman_kou_1997,gobet_menozzi_2010,primozic_2011}. Broadie et al.~\citeyearpar{broadie_glasserman_kou_1997} and Gobet et al.~\citeyearpar{gobet_menozzi_2010} suggest using a boundary correction method in which the boundary is artificially shifted a distance in the inward normal direction which depends on the time-step and diffusion coefficient of the process. Primo\v{z}i\v{c}~\citeyearpar{primozic_2011} uses the distribution of the minimum (equivalently, maximum) of a Brownian bridge to estimate one-dimensional boundary passage between time-steps. For reasons that will become clear in section~\ref{sec:exit_conditions}, we term these methods ``bubble wrap'' and ``max-sampling''. We provide additional numerical evidence that the bubble wrap correction is effective. We also generalize max-sampling to $n$ dimensions, and provide further numerical evidence that it will eliminate bias. Both of these corrections are easy to implement and worthwhile to include in any numerical simulation of Brownian motion in a region with boundaries.

Sampling error is another challenge with Monte Carlo methods, which we address with ideas from machine learning. In particular, we use temporal difference learning (TDL) to aggregate information from sample paths. This paradigm, an instance of the wider field of reinforcement learning, has several advantages over the simple Monte Carlo method and has been used to great success by the machine learning community~\citep{suttonbook}. Importantly, this paradigm is also amenable to the use of deep neural networks or other parametrized functions as a basis for a solution. Recent interesting deep learning methods select an objective function from the differential equation directly, analogous to the traditional finite difference and finite element methods~\citep{lagaris_likas_fotiadis_1998,weinan_han_jentzen_2017,han_jentzen_weinan,sirignano_spiliopoulos_2018,raissi_perdikaris_karniadakis_2019,karumuri_tripathy_bilionis_panchal_2020,raissi,raissi_karniadakis_2018,raissi_yazdani_karniadakis,raissi_2018,weinan_yu_2018,zhu_zabaras_koutsourelakis_perdikaris_2019,nabian_meidani_2019}, contrasting the probabilistic approach we study. Han et al.~\citeyearpar{han_nica_stinchcombe} use a deep learning probabilistic method to solve elliptic differential equations, but they did not consider bias in their study. In this paper, we study the effect of bias and sampling error in the TDL paradigm using Chebyshev polynomials as basis functions.

In section~\ref{sec:problem_setting}, we introduce the problem we aim to solve, along with the input requirements of our method and introduce some notation. In section~\ref{sec:monte_carlo_method}, we describe the traditional Monte Carlo method for solving elliptic PDEs, along with various subroutines which will reduce or even eliminate bias from this method. Finally, in section~\ref{sec:temporal_difference_learning}, we describe a method which uses TDL instead of Monte Carlo while retaining the bias reduction subroutines introduced in section~\ref{sec:monte_carlo_method}.

\section{Problem Setting}
\label{sec:problem_setting}

We assume we are on a connected domain $\Omega \subset \mathbb{R}^d$ which has a smooth boundary $\partial \Omega$ and we want to solve elliptic partial differential equations. We will study in detail two prototypical examples for a scalar unknown $u : \Omega \to \mathbb{R}$, namely:
$$ \Delta u(\vec x) =f(\vec x),\quad\vec x \in \Omega,$$
where $f: \Omega \to \mathbb{R}$ and with boundary condition
$$ u(\vec x) = g(\vec x),\quad\vec x\in \partial \Omega,$$
where $g : \partial \Omega \to \mathbb{R} $.

Our method is quite general and will obtain a numerical solution to the problem from the following inputs:
\begin{enumerate}
\item A signed distance function $\dO : \mathbb{R}^d \to \mathbb{R}$, which gives the signed distance to the boundary  $\dO(\vec x) = \pm||\vec{x} - Proj_{\partial \Omega}(\vec x)||$ for all $\vec{x}$. $\dO(\vec x)$ is negative inside $\Omega$, zero on $\partial \Omega$, and positive outside $\Omega$.\\
\item An extension of the boundary data by $g: \mathbb{R}^d \to \mathbb{R}$ which extends the boundary data to all of $\mathbb{R}^d$ in such a way that $g(\vec x) - g( Proj_{\partial \Omega}(\vec x) ) = O(||\vec x -  Proj_{\partial \Omega}(\vec x) ||)$. We abuse notation and often use $g$ to denote this function too.
\end{enumerate}

Our method applies more generally to many types of second order elliptic equations $L[u] = f$, and will be well suited for use in solving parabolic equations of the form $u_t = L[u]+f$. However, to simplify the exposition, we will focus on the simplest case where $L = \Delta$, Poisson's equation.

\subsection{Feynman-Kac Formula: Theory}

The Feynman-Kac formula gives an exact solution to the PDE in terms of an expectation of a Brownian path:

\begin{equation}
 u(\vec{x}) = \E \left[  g(\vec B(T))-\frac12 \intop_{0}^T f(\vec B(t))~\d t~\bigg| ~{\vec{B}(0) = \vec{x}} \right],\label{eq:FeynmanKac}
\end{equation}
where $\vec B(t)$ is a Brownian motion, and $T = \inf\{ t> 0 : \vec B(t) \notin \Omega\}$ is the first exit time of the Brownian motion from the domain $\Omega$. This formula is the inspiration for each of the algorithms we will present.

\subsection{Tangent Plane Approximation: $B_{\perp}$ and $\vec{B}_\parallel$} \label{sec:tangent_plane}

In the case that $\partial \Omega$ is a plane,  it is useful to decompose the $n$-dimensional Brownian motion $\vec B(t)$ into two parts: a $1$-dimensional component $B_{\perp} = \dO(\vec B(t))$ that represents the component of the Brownian motion perpendicular to the boundary $\partial \Omega$, and an $d-1$ dimensional component $\vec B_\parallel$ that represents the remaining directions which are parallel to $\partial \Omega$.

By the properties of Brownian motion, $B_{\perp}$ will be a $1$ dimensional Brownian motion and $\vec B_\parallel$ will be an independent $d-1$ dimensional Brownian motion. The hitting time $T$ in the Feynman-Kac formula in this case is the classic barrier problem $T = \inf\{s: B_\perp(s) > 0\} $ of a 1 dimensional Brownian motion. Since $\vec B_\parallel$ is independent of $T$ in this case, the location of exit $\vec B_\parallel(T)$ can be well understood.

For a more general surface $\partial \Omega$, the above independence and relation to one-dimensional Brownian motion is not as clear. Also, the definition of $\vec B_\parallel(T)$ is not obvious. However, for the purposes of the Feynman-Kac formula, and assuming that $\partial \Omega$ is sufficiently smooth, when $\vec B(t)$ is close to $\partial \Omega$, and when we look over a short enough time interval, $\partial \Omega$ is well approximated by the tangent plane to the surface $\Omega$ at the point $\Proj_{\partial \Omega}\vec B(t)$. We will use this approximation by a tangent plane to develop our numerical methods below.

Corners in $\partial \Omega$ are not well approximated this tangent plane approximation. However, corners can be understood in terms of a first hitting time to more than one plane, so a more complicated version of our analysis that takes into account what happens for multiple planes could apply. We do not explore this here, but leave this issue for future work.

\subsection{Example Problems}
We will demonstrate our methods in the case $d=2$ on the unit disk  $\Omega = {x_1^2 + x_2^2 < 1} \subset \mathbb{R}^2$. The signed distance function is $\dO(\vec{x}) =  \pm \sqrt{|1- x_1^2 +x_2^2}|$ and $\1_\Omega = 1\{x_1^2 + x_2^2 < 1\}$

We will look at two problems for this domain. Both problems have simple exact solutions that allow us to investigate the error of our methods.

\qquad Dirichlet Problem: \quad
$f \equiv 0$ and $g(\vec x) = \1\{x_2 >0\}$ with exact solution
$u(\vec x) = \frac12 + \frac{1}{\pi}\mathrm{arctan}\left(\frac{2x_2}{1-x_1^2-x_2^2}\right).$

\qquad Poisson Problem: \quad
$f \equiv 1$ and $g \equiv 0$ with exact solution
$u(\vec x) = \frac14(x_1^2+x_2^2-1).$

\section{Monte Carlo Method}
\label{sec:monte_carlo_method}

If one could sample Brownian paths $B(\cdot)$ exactly, a simple estimate for $u(\vec{x}_0)$ using the Feynman-Kac formula would be obtained by the  following algorithm:

\qquad Step 0. Sample $B(\cdot)$ started from $B(0)=\vec x_0$.

\qquad Step 1. Find the hitting time $T$. 

\qquad Step 2. Find $\intop_0^T f(\vec B(t))~\d t$.

\qquad Step 3. Find $g(\vec B_T)$.

Adding these together as in Eq.~\eqref{eq:FeynmanKac} would give us an unbiased estimate for $u(\vec{x}_0)$. Taking an empirical average over $N$ such Brownian motions we get an estimate for $u(\vec{x}_0)$ whose error is typically of size $\sqrt{N}^{-1}$. Averaging over many samples in this way is the basis for the well-known Monte Carlo method. 

In practice, we do not have access to a Brownian path $B(\cdot)$ exactly, and we must estimate Step 1, 2 and 3 by some discretization scheme. Once this has been done, we obtain the Monte Carlo method, Alg.~\ref{alg:monte_carlo}. The parameters to this algorithm are: $N$, the number of samples; $\Delta \tau$, the time-step used for discretizing the Brownian motion; and $\vec{x}_0$, the location at which we estimate the value of $u$.

The algorithm relies on three subroutines which estimate the different parts of the Feynman-Kac formula Eq.~\eqref{eq:FeynmanKac}:
\begin{enumerate}
    \item $\ExitCondition$, which estimates the hitting time $T$ to within a $\Delta\tau$-length interval;
    \item $\fEstimate$, which estimates the contribution from $\intop_0^T f(\vec B(t))~\d t$;
    \item $\gEstimate$, which estimates the contribution from  $g(\vec B_T)$.
\end{enumerate}

By changing the subroutines ExitCondition, fEstimate, and gEstimate, we can obtain variations on the basic Monte Carlo method. The most na\"{i}ve choices for these functions are consistent (in that convergence occurs as $\Delta \tau \to 0$), but biased in that there are systematic errors that do not go to zero as the number of samples goes to infinity. Good choices for these subroutines can improve the accuracy of the Monte Carlo method considerably.

In the algorithm, $\vec B_{\mathrm{old}}$ and $\vec B_{\mathrm{new}}$ always represent successive positions of a Brownian motion sampled time $\Delta \tau$ apart~\textemdash~we can think of this as $\vec B_{\mathrm{old}}=\vec B(0)$ and $\vec B_{\mathrm{new}}=\vec B(\Delta \tau)$. What the Brownian motion does at the intermediate times $t \in (0,\Delta \tau)$ is not sampled by the algorithm but is nevertheless relevant to the Feynman-Kac formula. For example, the hitting time $T$ might occur during an intermediate time! Good choices for ExitCondition, fEstimate, and gEstimate take this point of view and try to account for this interval $(0,\Delta \tau)$. Conditioned on the values $\vec B_{\mathrm{old}}$ and $\vec B_{\mathrm{new}}$, the Brownian motion is a Brownian bridge, which means we know and can exploit the law of the process on $(0,\Delta \tau)$.

In the following sections, we will describe and analyze some options for ExitCondition, fEstimate, and gEstimate. In Fig.~1, the pointwise error for the Monte Carlo method applied to the two test problems, Dirichlet and Poisson, is shown for three choices for ExitCondition: na\"{i}ve, bubble-wrap, and max-sampling, and the na\"{i}ve and corrected versions of gEstimate and fEstimate. A large time-step $\Delta \tau=0.1$ is used to accentuate the bias. By reducing the bias, non-na\"{i}ve choices for the subroutines dramatically reduce the method's error.

\begin{algorithm}
  \caption{Monte Carlo method: provides an estimate for $u(\vec{x}_0)$. Several variations are possible by varying the ExitCondition, fEstimate, and gEstimate functions.}
  \begin{algorithmic}[1]
    \Require{Functions $\fEstimate,\gEstimate, \ExitCondition$}
    \Statex
    \Function{MonteCarlo}{$N, \Delta \tau, \vec{x}_0$, ExitCondition, gEstimate,  fEstimate}
      \Let{$U$}{$0$}
      \For{$i \in \{1,2,\ldots,N\}$}
      \Let{$W$}{$0$}
      \Let{$\vec B_{\mathrm{old}}$}{$\vec x_0$}
      \Let{$\vec B_{\mathrm{new}}$}{$\vec x_0$}
        \While{NOT $\ExitCondition(\vec B_{\mathrm{old}},\vec B_{\mathrm{new}}, \Delta \tau)$}
         
         \Let{$\vec B_{\mathrm{old}}$}{$\vec B_{\mathrm{new}}$}
         \Let{$\vec B_{\mathrm{new}}$}{$\vec B_{\mathrm{old}} + \sqrt{\Delta \tau} \mathcal{N}(\vec{0},I_d)$}
         \Let{$W$}{$W - \frac12 \fEstimate(\vec{B}_{\mathrm{old}}, \vec{B}_{\mathrm{new}}, \Delta \tau)$}
        \EndWhile
      \Let{$W$}{$W + \gEstimate(\vec{B}_{\mathrm{old}}, \vec{B}_{\mathrm{new}}, \Delta \tau)$}
      \Let{$U$}{$U + \frac{1}{N}W$}
      \EndFor
      \State \Return{$U$}
    \EndFunction
  \end{algorithmic}
\label{alg:monte_carlo}
\end{algorithm}

\subsection{$\ExitCondition$}
\label{sec:exit_conditions}
The most na\"{i}ve exit condition for the Monte Carlo scheme is:
$$\text{na\"{i}ve: }\ExitCondition(\vec B_{\mathrm{old}},\vec B_{\mathrm{new}} ,\Delta \tau) = \1{\{\vec B_{\mathrm{new}}} \notin \Omega\}$$

It turns out that this exit condition has a systematic bias of order $O(\sqrt{\Delta \tau})$ to overestimate the exit time $T$, shown in Cor.~\eqref{cor:T_estimate} below.

\subsubsection{Discretization Bias of the Na\"{i}ve Exit Condition}

 Whenever two subsequent steps remain inside the disk, there is a possibility that an exit nonetheless occurred between those two steps.  Hence the na\"{i}ve exit condition will systematically overestimate the exit time $T$. This can lead to systematic bias in estimates to the integral $\intop_0^T f(B(t))~\d t$ (for example if $f$ is always positive). Overestimating $T$ also means the distance the Brownian motion travels from its starting point $B(0)$ to its exit point $B(T)$ will be overestimated. Hence the value of $g(B(T))$ will be sampled at locations further from the starting point $\vec x_0$ than the true exit location. This will lead to an underestimate at locations near the boundary where $g$ is larger than average and an overestimate at locations near the boundary where $g$ is smaller than average.
 
 The following theorems quantify this ``overestimation-of-$T$'' bias in the case that $\partial \Omega$ is a plane using the decomposition of the distance to the plane as in section~\ref{sec:tangent_plane}. When $\partial \Omega$ is well approximated by a tangent plane, we should expect these error estimates to also hold. 

\begin{theorem}
\label{thm:T_Estimate}
Suppose that $\partial \Omega$ is a plane, and the Brownian motion $\vec B$ is decomposed into its perpendicular and parallel directions $B_\perp$ and $\vec B_\parallel$ as in section~\ref{sec:tangent_plane}. Let $T=\inf\left\{ t:B_{\perp}(t)\geq 0 \right\} $ be the true time
of exit and let $T^{\Delta\tau}=\min_{n\in\mathbb{N}}\left\{ n\Delta\tau:B_{\perp}(n\Delta\tau)\geq 0 \right\} $
be the first time that the discretization using time-steps of size $\Delta\tau$ observes an exit. Then $T^{\Delta \tau} -  T$ is of order $\Delta \tau$ and the difference converges in distribution according to
\begin{equation}
\frac{T^{\Delta\tau}-T}{\Delta \tau} \Rightarrow U + \min\{ k\geq 0: W(U+k) >0 \},
\end{equation}
in which $U$ is a uniform $(0,1)$ random variable and $W(\cdot)$ is an independent standard Brownian motion.
\end{theorem}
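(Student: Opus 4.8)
The plan is to derive an \emph{exact} distributional identity for $(T^{\Delta\tau}-T)/\Delta\tau$ that is valid at every $\Delta\tau>0$, and then to identify its limit by letting only one ingredient vary: the position of $T$ relative to the discretization grid. First I would locate $T$ within the grid. Writing $n_0=\lfloor T/\Delta\tau\rfloor$ and $U_{\Delta\tau}=T/\Delta\tau-n_0\in[0,1)$ for the fractional part, we have $T=(n_0+U_{\Delta\tau})\Delta\tau$. Because $T$ is the first passage of $B_\perp$ to $0$ and $B_\perp$ starts strictly inside, $B_\perp(t)<0$ for all $t<T$ almost surely; in particular the discrete scheme cannot register an exit at any grid time $\le n_0\Delta\tau$, so $T^{\Delta\tau}$ is the first grid time strictly after $T$ at which $B_\perp\ge 0$. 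Applying the strong Markov property at the stopping time $T$, the post-exit process $\tilde B(s):=B_\perp(T+s)$ is a standard Brownian motion (recall $B_\perp(T)=0$) independent of $\mathcal F_T$, hence independent of $U_{\Delta\tau}$, which is $\mathcal F_T$-measurable. The grid time $(n_0+1+k)\Delta\tau$ sits a distance $(1-U_{\Delta\tau}+k)\Delta\tau$ past $T$, so the exit test there reads $\tilde B\big((1-U_{\Delta\tau}+k)\Delta\tau\big)\ge 0$.

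Next I would remove the $\Delta\tau$ from the Brownian evaluations by scaling. Setting $W(s):=\tilde B(s\,\Delta\tau)/\sqrt{\Delta\tau}$ defines a standard Brownian motion, still independent of $U_{\Delta\tau}$, and the test becomes $W(1-U_{\Delta\tau}+k)\ge 0$. Taking the first admissible $k$ yields the exact identity
\begin{equation}
\frac{T^{\Delta\tau}-T}{\Delta\tau}=(1-U_{\Delta\tau})+\min\big\{k\ge 0: W(1-U_{\Delta\tau}+k)\ge 0\big\},
\end{equation}
where $W$ is independent of $U_{\Delta\tau}$ and $1-U_{\Delta\tau}$ is supported in $(0,1]$. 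Since a fixed-time marginal of $W$ has no atom at $0$, replacing $\ge 0$ by $>0$ changes nothing almost surely, matching the form in the statement.

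It remains to pass to the limit, and here the only $\Delta\tau$-dependence lives in $U_{\Delta\tau}$. The key lemma is that $U_{\Delta\tau}=\{T/\Delta\tau\}$ converges in distribution to $\mathrm{Unif}(0,1)$ as $\Delta\tau\to 0$. This equidistribution holds because the first-passage time $T$ has an absolutely continuous law (the L\'evy / inverse-Gaussian density $\tfrac{a}{\sqrt{2\pi t^3}}e^{-a^2/(2t)}$ for start point $-a$); concretely $\p(U_{\Delta\tau}\le u)=\sum_n\int_{n\Delta\tau}^{(n+u)\Delta\tau}p_T(t)\,dt\to u$ by a Riemann-sum argument, equivalently via Riemann--Lebesgue applied to the Fourier coefficients $\E[e^{2\pi i m T/\Delta\tau}]\to 0$ for $m\ne 0$. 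Because $W$ does not depend on $\Delta\tau$, we obtain joint convergence $(1-U_{\Delta\tau},W)\Rightarrow(V,W)$ with $V\sim\mathrm{Unif}(0,1)$ independent of $W$ (and $1-U_{\Delta\tau}$ shares the symmetric uniform limit written as $U$ in the statement), and applying the continuous mapping theorem to $F(v,w)=v+\min\{k\ge0: w(v+k)\ge0\}$ gives the claimed limit.

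The main obstacle is this last step. The functional $F$ is integer-valued in its second summand and therefore discontinuous, so I must verify that $F$ is almost surely continuous at the limiting pair $(V,W)$ before invoking continuous mapping: for a.e.\ Brownian path the minimizing index $k^*$ satisfies $W(V+k^*)>0$ strictly while $W(V+j)<0$ strictly for $j<k^*$, and $V$ avoids the null set of its own critical zeros, so small perturbations leave $k^*$ unchanged. Establishing this non-degeneracy, together with the density/equidistribution lemma that feeds it, is where the real care is needed; the algebraic identity itself is exact and essentially free once the strong Markov decomposition and Brownian scaling are in place.
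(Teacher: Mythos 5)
Your proposal is correct, but it takes a genuinely different route: the paper's entire proof consists of the tangent-plane reduction to one dimension (which you perform implicitly by working with $B_\perp$ directly) followed by a citation of Theorem 1 of \citep{1708.04356}, which handles the overshoot of the discretely sampled walk $B_\perp(n\Delta\tau)$ past the barrier. You instead prove the one-dimensional statement from scratch: the exact pre-limit identity $(T^{\Delta\tau}-T)/\Delta\tau=(1-U_{\Delta\tau})+\min\{k\ge 0:\, W(1-U_{\Delta\tau}+k)\ge 0\}$ via the strong Markov property at $T$ and Brownian scaling, equidistribution of the fractional part $U_{\Delta\tau}=T/\Delta\tau - \lfloor T/\Delta\tau\rfloor$ from the absolute continuity of the inverse-Gaussian first-passage law via Riemann--Lebesgue/Weyl, and the continuous mapping theorem with an almost-sure continuity check for the integer-valued functional. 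What the paper's approach buys is brevity, and the same reference also supplies the $\zeta(1/2)$ overshoot statement used in its next theorem; what yours buys is a self-contained, elementary argument together with an exact identity valid at every $\Delta\tau>0$, which isolates the equidistribution of $U_{\Delta\tau}$ as the sole asymptotic ingredient. Two small repairs are needed in your write-up, neither of which is a gap in the approach. First, the justification ``because $W$ does not depend on $\Delta\tau$'' is literally false: $W(s)=B_\perp(T+s\Delta\tau)/\sqrt{\Delta\tau}$ is a different functional of the underlying path for each $\Delta\tau$; the correct statement is that for every $\Delta\tau$ the pair $(1-U_{\Delta\tau},W)$ has a product law whose second marginal is the fixed Wiener measure, so weak convergence of the first marginal alone yields joint convergence to the independent product, after which continuous mapping applies as you describe. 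Second, before analyzing perturbations of the minimizing index $k^*$ you should record that $k^*$ is almost surely finite --- for instance because $\bigl(W(1-U_{\Delta\tau}+k)\bigr)_{k\ge 0}$ is, after its first term, a mean-zero nondegenerate random walk and hence takes positive values eventually --- which is also what guarantees $T^{\Delta\tau}<\infty$ almost surely in the first place.
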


\begin{proof}
By the decomposition of section~\ref{sec:tangent_plane}, the hitting time $T$ is precisely the hitting time of a 1 dimensional random walk. The result then follows by Theorem 1 of~\citep{1708.04356}. 
\end{proof}

\begin{corollary}
\label{cor:T_estimate}
With the same assumptions and definitions as in theorem~\ref{thm:T_Estimate}, assume also that the function $f$ is bounded by $||f||_\infty$ and that the function $g$ is Lipschitz with Lipschitz constant $||g||_{\text{Lip}}$. Then the error in estimating $T$ leads to an error in $\intop_0^T f(\vec B(t))~\d t$ and $g(\vec B(T))$ of sizes
\begin{equation}\left| \intop_0^{T^{\Delta\tau}} f(\vec B(t))~\d t -\intop_0^{T} f(\vec B(t))~\d t \right| \leq O(\Delta \tau) ||f||_{\infty},
\end{equation}
and
\begin{equation}\left|g(\vec B(T^{\Delta \tau})) - g(\vec B(T)) \right|\leq O(\sqrt{\Delta \tau}) ||g||_{\text{Lip}}.
\end{equation}

\end{corollary}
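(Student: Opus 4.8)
\noindent The plan is to reduce both bounds to the single fact supplied by Theorem~\ref{thm:T_Estimate}: that $T^{\Delta\tau}-T$ is a nonnegative random variable of order $\Delta\tau$. First note that $T^{\Delta\tau}\ge T$ always, since $T$ is the first time $B_\perp$ reaches $0$, so every grid point strictly before $T$ has $B_\perp<0$ and cannot trigger an observed exit; consequently the two integrals differ only over $[T,T^{\Delta\tau}]$.

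For the $f$-estimate I would write
\begin{equation}
\left|\intop_0^{T^{\Delta\tau}}f(\vec B(t))~\d t-\intop_0^{T}f(\vec B(t))~\d t\right|=\left|\intop_{T}^{T^{\Delta\tau}}f(\vec B(t))~\d t\right|\le\|f\|_\infty\,(T^{\Delta\tau}-T),
\end{equation}
using only $|f|\le\|f\|_\infty$. Since $T^{\Delta\tau}-T=O(\Delta\tau)$ by Theorem~\ref{thm:T_Estimate}, the right-hand side is $O(\Delta\tau)\|f\|_\infty$. This half is immediate once the order of the exit-time error is known.

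For the $g$-estimate, the Lipschitz hypothesis gives
\begin{equation}
\left|g(\vec B(T^{\Delta\tau}))-g(\vec B(T))\right|\le\|g\|_{\text{Lip}}\,\left\|\vec B(T^{\Delta\tau})-\vec B(T)\right\|,
\end{equation}
so the task becomes bounding the spatial displacement of $\vec B$ over $[T,T^{\Delta\tau}]$. This is where the extra square root enters: the interval has length $O(\Delta\tau)$, but a Brownian increment over a time interval of length $\delta$ has size of order $\sqrt\delta$, giving $O(\sqrt{\Delta\tau})$. To make this precise I would split into the perpendicular and parallel parts of section~\ref{sec:tangent_plane}. Because $B_\perp$ and $\vec B_\parallel$ are independent and both $T$ and $T^{\Delta\tau}$ are measurable with respect to $B_\perp$, conditionally on $T^{\Delta\tau}-T$ the parallel displacement $\vec B_\parallel(T^{\Delta\tau})-\vec B_\parallel(T)$ is a centered Gaussian with covariance $(T^{\Delta\tau}-T)I_{d-1}$, hence of order $\sqrt{\Delta\tau}$; the perpendicular displacement equals $B_\perp(T^{\Delta\tau})$ (as $B_\perp(T)=0$), which Theorem~\ref{thm:T_Estimate} identifies in the limit with $\sqrt{\Delta\tau}$ times the $O(1)$ overshoot $W(U+k)$ and so is also $O(\sqrt{\Delta\tau})$.

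The main obstacle is that $[T,T^{\Delta\tau}]$ is a \emph{random} interval whose endpoints are stopping times correlated with the path, so one cannot simply insert a deterministic $\delta$ into the scaling law. The clean remedy is the strong Markov property at $T$: the shifted process $s\mapsto\vec B(T+s)-\vec B(T)$ is a fresh Brownian motion independent of $\mathcal F_T$, and $T^{\Delta\tau}-T$ is a stopping time for it of order $\Delta\tau$. Applying Brownian scaling together with a maximal inequality (Doob's $L^2$ inequality or the reflection principle) to $\sup_{0\le s\le T^{\Delta\tau}-T}\|\vec B(T+s)-\vec B(T)\|$ then controls the displacement by $O(\sqrt{\Delta\tau})$ uniformly over the randomness of the interval. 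Combining this with the Lipschitz bound yields the claimed $O(\sqrt{\Delta\tau})\|g\|_{\text{Lip}}$.
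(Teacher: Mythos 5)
Your proof is correct and takes essentially the same route as the paper: the first bound is $\|f\|_\infty\,(T^{\Delta\tau}-T)=O(\Delta\tau)\|f\|_\infty$ directly from Theorem~\ref{thm:T_Estimate}, and the second combines the Lipschitz hypothesis with the independence of $\vec B_\parallel$ from $B_\perp$ (hence from the stopping times) and Brownian scaling to get an $O(\sqrt{\Delta\tau})$ displacement. If anything you are more careful than the paper, whose proof silently reduces $g(\vec B(t))$ to a function of $\vec B_\parallel$ alone; you additionally control the perpendicular overshoot $B_\perp(T^{\Delta\tau})$ and justify, via the strong Markov property at $T$, why scaling may be applied over the random interval $[T,T^{\Delta\tau}]$.
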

\begin{proof}
The first result follows immediately from theorem~\ref{thm:T_Estimate}. For the second, notice that on the boundary $\partial \Omega$, by the decomposition in section~\ref{sec:tangent_plane}, that  $g(\vec B(t))$ depends only on $\vec B_\parallel(t)$. But $\vec B_\parallel(\cdot)$ is independent of $B_\perp(\cdot)$. Hence, by independence and by Brownian scaling, since $T^{\Delta \tau} - T = O(\Delta \tau)$ from theorem~\ref{thm:T_Estimate}, we will have that $||\vec B_\parallel(T^{\Delta \tau}) - \vec B_\parallel(T)|| = O(\sqrt{\Delta \tau})$ and the result follows.
\end{proof}

\begin{figure}[!htbp]
  \centering
  \includegraphics{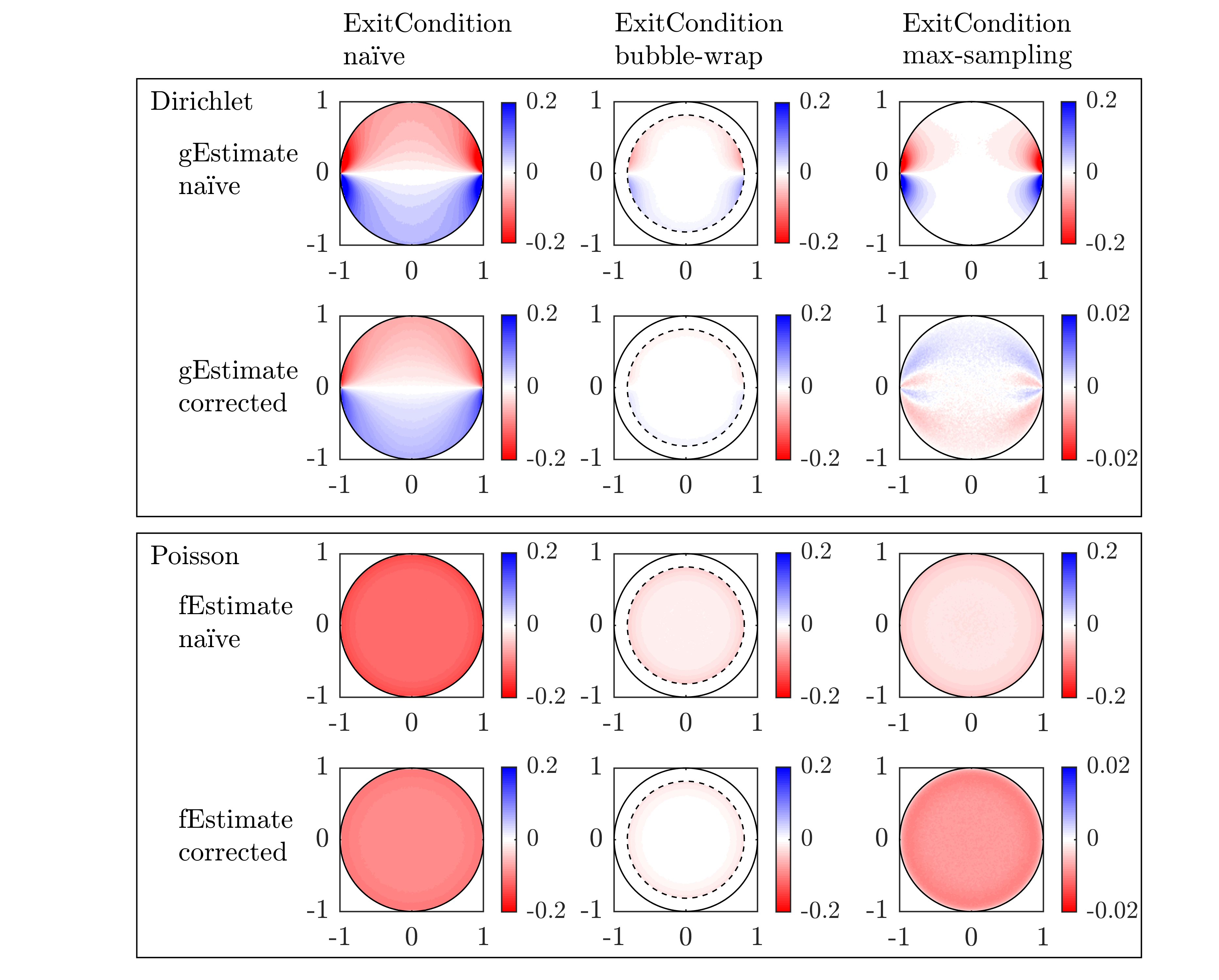}
  \caption{The bias in the Monte Carlo method. In the upper plots, the boundary data is $g(x_1,x_2)=\1\{x_2\geq0\}$, the Dirichlet problem. In the lower plots, the forcing is $f(x_1,x_2)=1$, the Poisson problem. In each column, a different exit condition is used: na\"{i}ve, bubble-wrap, and max-sampling. The two rows for each problem show different $\gEstimate$ and $\fEstimate$ methods: na\"{i}ve and corrected. For the bubble-wrap exit condition, the bias is not computed within $b$ of the boundary (shown with the dashed line). In all cases $\Delta \tau=0.1$. For the Dirichlet problem, a symmetry in the problem results in zero bias along the line $x_2=0$. In the upper half-disk away from $x_2=0$ where the boundary data is one, the random walk method systematically underestimates the solution, while in the lower half-disk away from $x_2=0$ where the boundary data is zero, the solution is an overestimate. The bias is most significant near the discontinuities in the boundary data. For the Poisson problem, the solution is always an underestimate. For each problem, the bubble-wrap and max-sampling exit conditions significantly reduce the bias. Note the factor of 10 reduction in scale when max-sampling is used with either estimate correction method.}
  \label{fig:bias}
\end{figure}

\subsubsection{Bubble-Wrap Exit Condition}

The discretization error in the na\"{i}ve exit condition is due to the discrete process systematically under counting collisions with the boundary. One way to reduce this bias is to counteract this undercounting by systematically increasing the collisions of the discrete process with the boundary. 

The simplest and most easily implemented way to reduce some of this bias is to ``bubble wrap'' each walker: count a walker as having ``crossed'' the boundary if it ever comes within some threshold distance $b > 0$ from the barrier (as opposed to only counting walkers that cross the boundary outright). Morally speaking, this threshold distance $b$ is supposed to account for walkers whose continuous time trajectories had crossed the boundary in between the two discrete samples $\vec B_{\mathrm{old}},\vec B_{\mathrm{new}}$. The exit condition to be used in the Monte Carlo method for this is
$$\text{bubble-wrap: }\ExitCondition(\vec B_{\mathrm{old}},\vec B_{\mathrm{new}}, \Delta \tau) = \1\{\dO(\vec B_{\mathrm{new}}) > -b\}$$

Since the bubble radius $b$ is supposed to account of the maximum of a Brownian motion on a time increment $\Delta \tau$, we should choose $b$ to be on the same order of this maximum: namely order $\sqrt{\Delta \tau}$. There are some theoretical reasons to believe that
$$ b = \frac{| \zeta(\frac12) |}{\sqrt{2\pi}} \sqrt{\Delta \tau} \approx 0.583 \sqrt{\Delta \tau} $$
is a good choice. This is because in the limit that $\Delta \tau \to 0$, the expected height difference between the discrete and continuous walks at the moment they are first observed to cross the boundary is $\frac{| \zeta(\frac12) |}{\sqrt{2\pi}} \sqrt{\Delta \tau}$.  The next theorem makes this more precise.

\begin{theorem}
Suppose that $\partial \Omega$ is a plane. Let $T=\inf\left\{ t:B_{\perp}(t)\geq 0 \right\} $ be the true time of exit and let $T^{\Delta\tau}=\min_{n\in\mathbb{N}}\left\{ n\Delta\tau:B_{\perp}(n\Delta\tau)\geq 0 \right\} $ be the first time that the discretization using time-steps of size $\Delta\tau$ observes an exit. Then the typical distance to the boundary $\dO$ observed at time $T^{\Delta\tau}$ is
$$\lim_{\Delta \tau \to 0} \frac{\E\left[  \dO(\vec B(T^{\Delta \tau})) \right]} {\sqrt{\Delta \tau}} = \frac{| \zeta(\frac12) |}{\sqrt{2\pi}}.$$

Moreover, if we define $M = \sup_{0<t<1} \dO( \vec B(t))$ to be the maximum distance over the time interval $t\in [0,1]$ and let $M^{\Delta \tau} =\max_{1\leq n \leq (\Delta \tau)^{-1}} \dO(\vec B( n \Delta \tau))$ be the maximum sampled over the discrete grid, then for any $x \in \mathbb{R}$,
$$
\lim_{\Delta \tau \to 0} \p\left(M^{\Delta \tau} + \frac{| \zeta(\frac12) |}{\sqrt{2\pi}} \sqrt{\Delta \tau} < x\right) = \p(M < x).
$$

\end{theorem}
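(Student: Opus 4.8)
The plan is to reduce both assertions to one-dimensional statements about the discretization error of the running maximum, equivalently the first-passage overshoot, of a scalar Brownian motion, and then invoke the same circle of ideas behind Theorem~\ref{thm:T_Estimate}. By the tangent-plane decomposition of section~\ref{sec:tangent_plane}, the signed distance $\dO(\vec B(t)) = B_\perp(t)$ is itself a standard one-dimensional Brownian motion, $T$ is its first passage to the level $0$, and the sampled values $B_\perp(n\Delta\tau)$ form a Gaussian random walk with step variance $\Delta\tau$. Thus $\dO(\vec B(T^{\Delta\tau}))$ is exactly the overshoot of this random walk above $0$ at the first sampled crossing, while $M$ and $M^{\Delta\tau}$ are the continuous and discrete maxima of $B_\perp$ on $[0,1]$.

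For the first assertion I would pass to rescaled coordinates $S_k = B_\perp(k\Delta\tau)/\sqrt{\Delta\tau}$, a unit-variance Gaussian walk started from $B_\perp(0)/\sqrt{\Delta\tau}$. As $\Delta\tau\to0$ this starting height recedes to $-\infty$, so by the time the walk is first observed at or above $0$ it has run for $\approx T/\Delta\tau\to\infty$ steps and its overshoot has relaxed to the \emph{equilibrium} (renewal) overshoot law $R_\infty$ of the ascending ladder process, independently of the fixed interior starting point. Since $\dO(\vec B(T^{\Delta\tau})) = \sqrt{\Delta\tau}\,S_{\tau_+}$ with $\tau_+ = \min\{k:S_k\geq 0\}$, and overshoots of Gaussian walks have uniformly bounded exponential moments (hence are uniformly integrable), the normalized expectation $\E[\dO(\vec B(T^{\Delta\tau}))]/\sqrt{\Delta\tau} = \E[S_{\tau_+}]$ converges to $\E[R_\infty]$. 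The substance of the theorem is the evaluation $\E[R_\infty] = |\zeta(\tfrac12)|/\sqrt{2\pi}$ for the standard Gaussian walk, obtained from Spitzer's identity and the Wiener--Hopf factorization, with $\zeta(\tfrac12)$ entering through the $\zeta$-regularization of the $\sum_n n^{-1/2}$-type sums in the expansion of the ladder-height moments. I would quote this computation from the one-dimensional discretization results of~\citep{1708.04356}, which rest on the classical Asmussen--Glynn--Pitman analysis, rather than reproduce it.

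For the second assertion I would use the distributional half of the same analysis: with $n=(\Delta\tau)^{-1}$, the gap $\sqrt{n}\,(M-M^{\Delta\tau})$ converges in law to the same equilibrium overshoot variable $L$, whose mean is $|\zeta(\tfrac12)|/\sqrt{2\pi}$. Rewriting
$$M^{\Delta\tau} + \frac{|\zeta(\tfrac12)|}{\sqrt{2\pi}}\,\sqrt{\Delta\tau} = M - \sqrt{\Delta\tau}\,\Bigl(\sqrt{n}\,(M-M^{\Delta\tau}) - \tfrac{|\zeta(\tfrac12)|}{\sqrt{2\pi}}\Bigr),$$
the bracketed factor is tight (it converges in law to $L-\E L$), so multiplying by $\sqrt{\Delta\tau}\to0$ forces the correction to $0$ in probability. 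Hence $M^{\Delta\tau}+\tfrac{|\zeta(\tfrac12)|}{\sqrt{2\pi}}\sqrt{\Delta\tau}\to M$ in probability, and since $M=\sup_{[0,1]}B_\perp$ has a continuous law (half-normal, by the reflection principle), the cumulative distribution functions converge at every $x$, which is the claimed limit. I would remark that to this leading order the precise constant does not drive the distributional convergence; its role is to match the mean gap identified in the first assertion, thereby justifying the bubble radius $b$.

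The main obstacle is the identification of $|\zeta(\tfrac12)|/\sqrt{2\pi}$ as the expected equilibrium overshoot (equivalently, the leading coefficient in $\E[M]-\E[M^{\Delta\tau}]$): the reduction to one dimension, the tightness argument, and the continuity of the limit law are routine, but pinning the constant requires the Spitzer/Wiener--Hopf asymptotics for the Gaussian ladder height. I would lean on the cited discretization theorem for this step, exactly as the proof of Theorem~\ref{thm:T_Estimate} does.
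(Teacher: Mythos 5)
Your proposal is correct and, at its core, follows the same route as the paper: the tangent-plane decomposition of section~\ref{sec:tangent_plane} reduces both claims to statements about the overshoot of a one-dimensional Gaussian random walk over a barrier, and the constant $|\zeta(\frac12)|/\sqrt{2\pi}$ is imported from the one-dimensional discretization results of~\citep{1708.04356}, exactly as the paper does (its entire proof is the reduction plus a citation of Theorem 1 and Proposition 1 there). Where you genuinely differ is the second assertion: the paper defers it to Proposition 1 of the reference, whereas you prove the stated CDF convergence by an elementary tightness argument --- $\sqrt{n}\,(M-M^{\Delta\tau})$ converges in law, so the shifted discrete maximum converges to $M$ in probability, and continuity of the law of $M$ upgrades this to convergence of the distribution functions at every $x$. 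That argument is sound and more self-contained, and your accompanying remark is a fair criticism of the theorem as written: since the correction term is $O(\sqrt{\Delta\tau})$ times a tight quantity, the displayed limit holds with $|\zeta(\frac12)|/\sqrt{2\pi}$ replaced by any constant, so the literal statement does not by itself single out the bubble radius; the constant's real significance is the mean-gap (second-order) result, which your first-part argument --- relaxation of the overshoot to the equilibrium ladder-height law, uniform integrability from light tails, and the Spitzer/Wiener--Hopf evaluation of the mean, quoted from the same reference --- captures correctly.
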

\begin{proof}
By the decomposition of section~\ref{sec:tangent_plane}, the problem is reduced to the overshoot of a 1 dimensional random walk hitting a barrier. The result then follows from Theorem 1 and Proposition 1 of~\citep{1708.04356}.
\end{proof}

Both of the statements of the theorem suggest that, on average, the discretized process tends to underestimate the maximum of the Brownian motion by $\frac{| \zeta(\frac12)|}{\sqrt{2\pi}} \sqrt{\Delta \tau}$, which makes this a natural choice for the bubble radius. In Fig.~\ref{fig:bubbleradius}, an estimate of the bias is computed for a range of bubble radii. The choice of $\frac{| \zeta(\frac12)|}{\sqrt{2\pi}}\sqrt{\Delta \tau}$ does a good job of eliminating the bias for both the Dirichlet and Poisson problems. 

\begin{figure}[!htbp]
  \centering
  \includegraphics{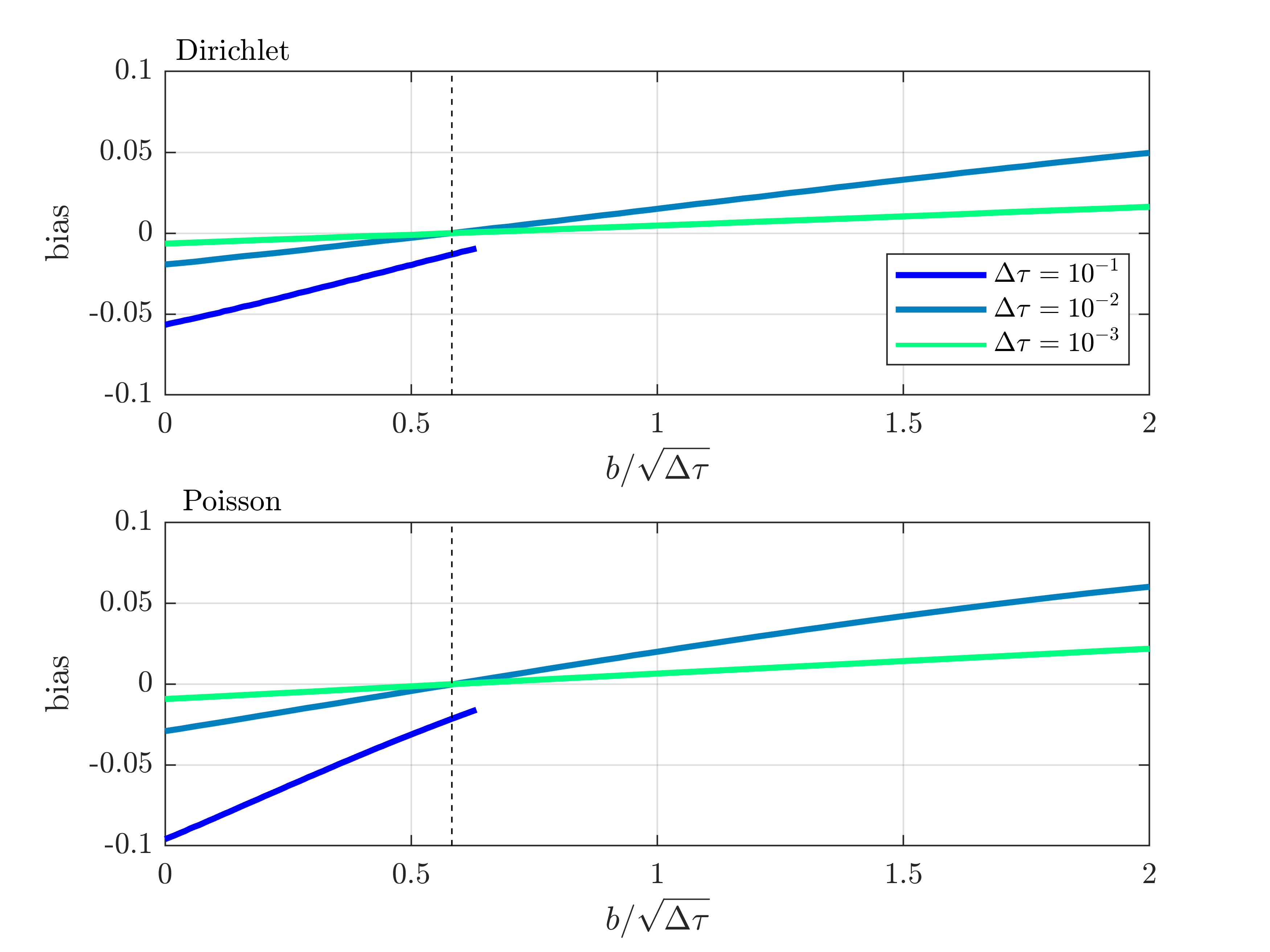}
  \caption{The bias in estimating $u(\vec{x})$ at $\vec{x}=(0.8\cos(\pi/3),0.8\sin(\pi/3))^T$ as a function of the bubble parameter $b$ for the Dirichlet and Poisson problems. Three values of $\Delta \tau=10^{-1},10^{-2},10^{-3}$ are shown with three colours. The dashed line shows $b/\sqrt{\Delta \tau}=|\zeta(1/2)|/\sqrt{2\pi}$, which very nearly eliminates the bias. Note that $b=0$ corresponds to the na\"{i}ve exit condition. The bias was estimated for each value of $b$ using $2^{24}$ samples.}
  \label{fig:bubbleradius}
\end{figure}

\subsubsection{Max-Sampling Exit Condition}
In the max-sampling ExitCondition, we further attempt to control the ``late exit'' discretization bias by a more accurate (and more complicated) exit condition. Here $\ExitCondition$ is a \emph{random} function: for any fixed vectors $\vec B_{\mathrm{old}},\vec B_{\mathrm{new}}$ it will sometimes evaluate to True and sometimes evaluate to False. The idea is that, conditioned on the values of $\dO(\vec B_{\mathrm{old}}))$ and $\dO(\vec B_{\mathrm{new}})$, the Brownian motion $B_\perp = \dO(\vec B(t))$ is approximately a Brownian bridge with $B_\perp(0)=\dO(\vec B_{\mathrm{old}})$ and $B_\perp(\Delta \tau) =\dO( \vec B_{\mathrm{new}})$. This is exactly true in the case that $\partial \Omega$ is a plane. By the reflection principle, the maximum of a one-dimensional Brownian bridge with $B(0) = 0$ and $B(\Delta \tau) = x$, is distributed like
$$M = \max_{t \in [0,\Delta \tau]} \left\{B(t)~|~B(0)=0, B(\Delta \tau)=x\right\} \stackrel{d}{=} \frac12( x + \sqrt{x^2 + 2\Delta \tau   E}),$$
in which $E\sim\textrm{Exp}(1)$ is an exponential random variable independent of everything else. Thus the maximum of $B_\perp$ can be simulated by this formula. This gives the exit condition
\begin{align*}
    \text{max-sample: }\ExitCondition(\vec B_{\mathrm{old}},\vec B_{\mathrm{new}}, \Delta \tau) &= \1\left\{ \frac12\left( \Delta \rho + \sqrt{(\Delta \rho)^2 + 2 \Delta \tau E}\right) > |\dO(\vec{B}_{\mathrm{old}})|\right\},\\
    \Delta \rho &:= \dO(\vec{B}_{\mathrm{new}}) - \dO(\vec{B}_{\mathrm{old}}).
\end{align*}
Incidentally, this exit condition will be satisfied automatically if it happens that $\dO(\vec{B}_{\mathrm{new}}) > 0$, i.e. $\vec{B}_{\mathrm{new}} \notin \Omega$. 

\subsection{Exit Time and Location Estimates}
\label{sec:time_loc_estimates}

Once we've determined that a Brownian motion has exited in a time interval of length $\Delta\tau$, we have to estimate when and where the exit occurred. From these estimates, we can obtain estimates for $\intop_0^{T} f(B(t))~\d t$ and for the value of the boundary condition at the location of exit. The na\"{i}ve choice for the exit time estimate is
$$\text{na\"{i}ve: }\tEstimate(\vec{B}_{\mathrm{old}}, \vec{B}_{\mathrm{new}}, \Delta \tau) = \Delta \tau.$$

Taking the midpoint of the time interval gives a slightly better tEstimate,
$$\text{na\"{i}ve+: }\tEstimate(\vec{B}_{\mathrm{old}}, \vec{B}_{\mathrm{new}}, \Delta \tau) =\frac{\Delta \tau}{2}.$$ 

However, we can be more precise than this by using the known distribution of the first exit time of a Brownian bridge. In the case that $\partial \Omega$ is a plane, the distribution of the exit time $T$ of the Brownian motion is given by the normalized intensity found in theorem 2 of~\citep{hieber_2013} (normalization is required when $\vec{B}_{\mathrm{new}} \in \Omega$). Translating this into our notation, the probability distribution for this time is
\begin{equation}
    f_{T}(t) = \frac{|\dO(\vec B_{\mathrm{old}})|}{\sqrt{2\pi}t^{3/2}\sqrt{1 - t/\Delta\tau}}\exp\left(\frac{(\Delta\rho)^2}{2\Delta\tau}-\frac{\dO(\vec B_{\mathrm{new}})^2}{2(\Delta\tau - t)} - \frac{\dO(\vec B_{\mathrm{old}})^2}{2t}\right).
    \label{eqn:exittimeintensity}
\end{equation}

Because of the independence between the direction normal to the plane $\partial \Omega$ and the orthogonal $d-1$ dimensional boundary, the exit location distribution is then the distribution of the Brownian bridge at time $T$. The distribution of the exit location conditional on the exit time $T$ is therefore
\begin{equation}
    \vec{x}_\mathrm{exit} \sim \Proj_{\partial\Omega}\left(\mathcal{N}\left(\left(1 - \frac{T}{\Delta\tau}\right)\vec{B}_{\mathrm{old}} + \frac{T}{\Delta\tau}\vec{B}_{\mathrm{new}}, \frac{T(\Delta\tau - T)}{\Delta\tau}I_d\right)\right).
    \label{eqn:Xexitdist}
\end{equation}

Ideally, we would sample $T$ and $\vec{x}_\mathrm{exit}$ as our exit time and location estimates. Unfortunately, the distribution of $T$ is difficult to sample efficiently and varies with $\dO(\vec{B}_\mathrm{old})$ and $\dO(\vec{B}_\mathrm{new})$. To circumvent this issue, we can exploit a connection to the distribution of local times of a Brownian bridge to obtain an approximation to the \emph{expected} exit time $\mathbb{E}\left[T\right]$, namely,

\begin{equation} 
\frac{1}{1 + \Delta \tau \left(\Delta\rho\right)^{-2}} \leq  \frac{\mathbb{E}\left[T\right]}{ (1-\lambda)\Delta \tau} \leq 1, \label{eqn:exit_time_ineq}
\end{equation}
in which
$$
\Delta\rho := \dO(\vec B_{\mathrm{new}})
- \dO(\vec B_{\mathrm{old}}),
$$
and
\begin{equation}
    \lambda := \frac{|\dO(\vec B_{\mathrm{new}})|}{|\dO(\vec B_{\mathrm{old}})| + |\dO(\vec B_{\mathrm{new}})|}. \label{eqn:lambda_defn}
\end{equation}
This inequality is proven for $\vec{B}_\mathrm{new} \notin \Omega$ in Appendix \ref{A:exit_time_ineq}. By the squeeze theorem,  Eq.~\eqref{eqn:exit_time_ineq} shows that as $\Delta \tau \to 0$, that
\begin{equation}
    \mathbb{E}\left[T\right] \to (1-\lambda)\Delta \tau. \label{eqn:T_estimate}
\end{equation}
We use this approximation as an easy means to approximate $T$,
$$\text{Corrected: }\tEstimate(\vec{B}_{\mathrm{old}},\vec{B}_{\mathrm{new}},\Delta\tau) =  (1-\lambda)\Delta\tau.$$

$(1-\lambda)\Delta\tau$ is also the time at which the linear interpolant between the start and end point of the Brownian bridge would hit the boundary. Given this approximation for $T$, we can also simply approximate $\vec{x}_{\mathrm{exit}}$ to be its mean from Eq.~\eqref{eqn:Xexitdist},
$$\text{Corrected: }\xEstimate(\vec{B}_{\mathrm{old}},\vec{B}_{\mathrm{new}}) =  (1-\lambda)\vec{B}_{\mathrm{new}} +\lambda \vec{B}_{\mathrm{old}}.$$

\subsection{fEstimate}
The function $\fEstimate$ provides an estimate for $\intop_{0}^{\min(T,\Delta \tau)} f(B(t))~\d t$ given $\vec B(0) = \vec B_{\mathrm{old}}$ and $\vec B(\Delta \tau) = \vec B_{\mathrm{new}}$. The na\"{i}ve choice for fEstimate is
$$\text{na\"{i}ve: }\fEstimate(\vec{B}_{\mathrm{old}}, \vec{B}_{\mathrm{new}}, \Delta \tau) = \Delta \tau f(\vec{B}_{\mathrm{old}}),$$ 
which already achieves error $O(||f||_\text{Lip}\Delta \tau)$ when $\Delta \tau < T$. The trapezoid rule gives a slightly better fEstimate,
$$\text{na\"{i}ve+: }\fEstimate(\vec{B}_{\mathrm{old}}, \vec{B}_{\mathrm{new}}, \Delta \tau) =\frac{\Delta \tau}{2} \left( f(\vec{B}_{\mathrm{old}}) + f(\vec{B}_{\mathrm{new}}) \right).$$ 

However, if the hitting time $T$ happens in between $\vec{B}_{\mathrm{old}}$ and $\vec{B}_{\mathrm{new}}$ (i.e. on the time interval $(0,\Delta \tau)$, then the resulting integral $\intop_{0}^{\min(T,\Delta \tau)} f(B(t))~\d t$ integrates over a time interval shorter than $\Delta \tau$. In this situation, the estimate for the integral of $f$ needs to be cut short. Using the estimate for the exit time $T$ from section~\ref{sec:time_loc_estimates},
$$\text{Corrected: }\fEstimate(\vec{B}_{\mathrm{old}}, \vec{B}_{\mathrm{new}}, \Delta \tau) = (1-\lambda) \Delta \tau f(\vec{B}_{\mathrm{old}}) \text{ if } T < \Delta \tau,$$
in which $\lambda$ is as in Eq.~\eqref{eqn:lambda_defn}. This correction formula can be easily implemented in the Monte Carlo method by adding a correction of $-\lambda \Delta \tau f(\vec{B}_{\mathrm{old}}) \text{ if } T < \Delta \tau$ to the accumulated estimate after $\ExitCondition$ is triggered.

\subsection{gEstimate}
\label{gEstimate}

The function $\gEstimate$ provides an estimate of the value of the boundary condition at the location of the exit of the Brownian motion. The na\"{i}ve estimate is simply to evaluate $g$ at the end point $\vec B_{\mathrm{new}}$,
$$\text{na\"{i}ve: } \gEstimate(\vec B_{\mathrm{old}}, \vec B_{\mathrm{new}}) = g(\vec B_{\mathrm{new}}).$$

This corresponds to an exit time estimate of $\Delta\tau$, the end of the time interval in which the walker exited, which is clearly an overestimate. According to Cor.~\eqref{cor:T_estimate}, the error in this estimate is $O(\sqrt{\Delta\tau})$. Using the estimate for exit location from section~\ref{sec:time_loc_estimates}, we obtain a better \gEstimate,
$$\text{Corrected: }\gEstimate(\vec{B}_{\mathrm{old}},\vec{B}_{\mathrm{new}}) = g\left( (1-\lambda)\vec{B}_{\mathrm{new}} +\lambda \vec{B}_{\mathrm{old}} \right).$$

\subsubsection{Brownian Root-Finding}

While the corrected gEstimate is clearly superior to the na\"{i}ve gEstimate, it is not clear how its accuracy will scale with the time-step $\Delta\tau$. It would be beneficial to have a strategy which does not require a small time-step to obtain a reasonable gEstimate. With such a strategy, one could enjoy the computational benefits of a large time-step while maintaining the accuracy which comes with a small time-step. This is the motivation behind what we term Brownian root-finding (BRF). This algorithm is to be used when $\vec{B}_{\mathrm{old}} \in \Omega$ and $\vec{B}_{\mathrm{new}} \notin \Omega$ and functions by sampling the Brownian bridge between these two points repeatedly to improve the estimate of the exit location. The term ``root-finding'' reflects the similarities between BRF and the commonly used bisection method of root-finding.

This procedure, much like the max-sampling exit condition, turns the exit time and location (hence gEstimate as well) into \emph{random} functions: for any fixed vectors $\vec{B}_{\mathrm{old}} \in \Omega, \vec{B}_{\mathrm{new}} \notin \Omega$, it will return varying exit times and locations. This algorithm makes use of the fact that when $\vec{B}_{\mathrm{old}} \in \Omega$ and $\vec{B}_{\mathrm{new}} \notin \Omega$, the Brownian motion from $\vec{B}_{\mathrm{old}}$ to $\vec{B}_{\mathrm{new}}$ is a Brownian bridge starting inside the domain and ending outside. The BRF algorithm will iteratively sample from Brownian bridges, honing in on the boundary and terminating with a good estimate for a walker's exit location.

BRF will not be applicable when $\vec{B}_{\mathrm{old}} \in \Omega$ and $\vec{B}_{\mathrm{new}} \in \Omega$, because a Brownian bridge from $\vec{B}_{\mathrm{old}}$ to $\vec{B}_{\mathrm{new}}$ is not guaranteed to exit the domain. 

\begin{figure}[!htbp]
  \centering
  \includegraphics[width = \textwidth]{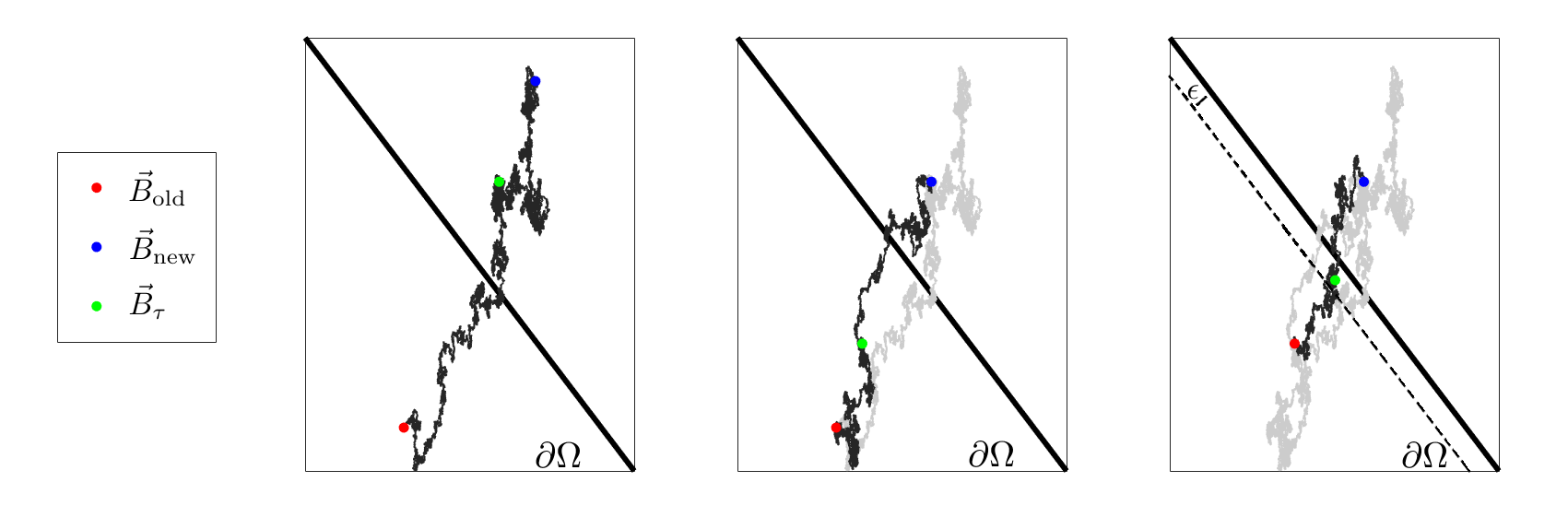}
  \caption{A depiction of the Brownian root-finding algorithm. Given samples $\vec{B}_{\mathrm{old}}$ and $\vec{B}_{\mathrm{new}}$, successive estimates of the exit time and location are computed by sampling Brownian bridges between points on the Brownian path inside and outside of the domain $\Omega$. In the first frame, points $\vec{B}_{\mathrm{old}}$ and $\vec{B}_{\mathrm{new}}$ are respectively inside and outside the domain. The point $\vec{B}_\tau$ is a sample of the Brownian bridge between $\vec{B}_{\mathrm{old}}$ and $\vec{B}_{\mathrm{new}}$. In the first panel, $\vec{B}_\tau$ happened to be outside the domain. In the second frame, we replace $\vec{B}_{\mathrm{new}}$ with the previous frame's $\vec{B}_\tau$ and record a new $\vec{B}_\tau$ as a sample of the Brownian bridge between points $\vec{B}_{\mathrm{old}}$ and $\vec{B}_{\mathrm{new}}$. The process continues in the third frame, terminating when $\vec{B}_\tau$ is within a small distance $\epsilon$ of the boundary. The detailed algorithm can be found in Alg.~\ref{alg:BRF}.}
  \label{fig:BRF}
\end{figure}

The procedure goes as follows. Letting $\vec B_{\mathrm{old}}=\vec B(0)$ and $\vec B_{\mathrm{new}}=\vec B(\Delta \tau)$, sample the Brownian bridge at time $\tau = \theta\Delta\tau$, where $\theta \in (0,1)$ is a parameter of the algorithm. If $\vec{B}_\tau \notin \Omega$, then we know that the Brownian motion exited prior to $\tau$. In this case, we assign $\vec{B}_\tau$ to $\vec{B}_{\mathrm{new}}$, and repeat. If $\vec{B}_\tau \in \Omega$, then it is possible, but not guaranteed, that the Brownian motion exited some time before $\tau$. In this case, we borrow ideas from section~\ref{sec:exit_conditions}, and apply some $\ExitCondition$ to $\vec{B}_{\mathrm{old}}, \vec{B}_\tau$, and $\tau$. If the Brownian motion is not determined to have exited, then we assign $\vec{B}_\tau$ to $\vec{B}_{\mathrm{old}}$ and repeat. If it is determined to have exited, then we are in the situation where $\vec{B}_{\mathrm{old}} \in \Omega$, $\vec{B}_\tau \in \Omega$, and the Brownian motion exited at some intermediate time. In this case, BRF is not applicable, and we simply return exit time and location estimates based on $\vec{B}_{\mathrm{old}},\vec{B}_\tau,$ and $\tau$. The detailed algorithm can be found in Alg.~\ref{alg:BRF}. In effect, BRF converts an exit condition and an exit time/location estimator when $\vec B_{\mathrm{new}}\in\Omega$ into an exit time/location estimator for any $\vec B_{\mathrm{new}}$. This procedure leads to the following gEstimate variation,
$$\text{BRF: }\gEstimate(\vec{B}_{\mathrm{old}},\vec{B}_{\mathrm{new}}) = g\left(\text{BRF}\left(\vec{B}_{\mathrm{old}},\vec{B}_{\mathrm{new}},\theta,\epsilon,\rho_{\partial\Omega},\ExitCondition,\tEstimate,\xEstimate\right)\right).$$

In all implementations, we use the max-sampling $\ExitCondition$ and the corrected $\tEstimate$ and $\xEstimate$. This method will be more computationally intensive than the corrected $\xEstimate$ alone, but it should be more accurate when using large time-steps.

Before we test whether BRF provides an improvement when using large-time steps, we will investigate the impact of BRF's parameter $\theta$. To do this, we simulate a one-dimensional Brownian motion starting at the origin $B(0) = 0$. In this case, there is a known distribution for the first passage time across $a$: $T^a := \inf\limits_{t \geq 0}\left\{B(t) = a\right\}$ is L\'{e}vy distributed with CDF $F(t) = \erfc\left(\frac{|a|}{\sqrt{2t}}\right)$. In Fig.~\ref{fig:BRFtest_theta}, we record empirical exit times past the barrier $a = 1$ using BRF estimates with different values of $\theta$ and compare these to the known distribution. We see that BRF overestimates the exit time. Also, BRF approximates the true CDF best for $\theta$ close to 1 and performs worse than the corrected estimate for $\theta$ close to 0. The difference in the CDF between $\theta = 0.5$ and $\theta = 0.95$ is small, despite $\theta = 0.95$ requiring significantly more samples of Brownian bridges. Therefore, we opt to use $\theta = 0.5$.

\begin{figure}[h]
    \centering
    \includegraphics{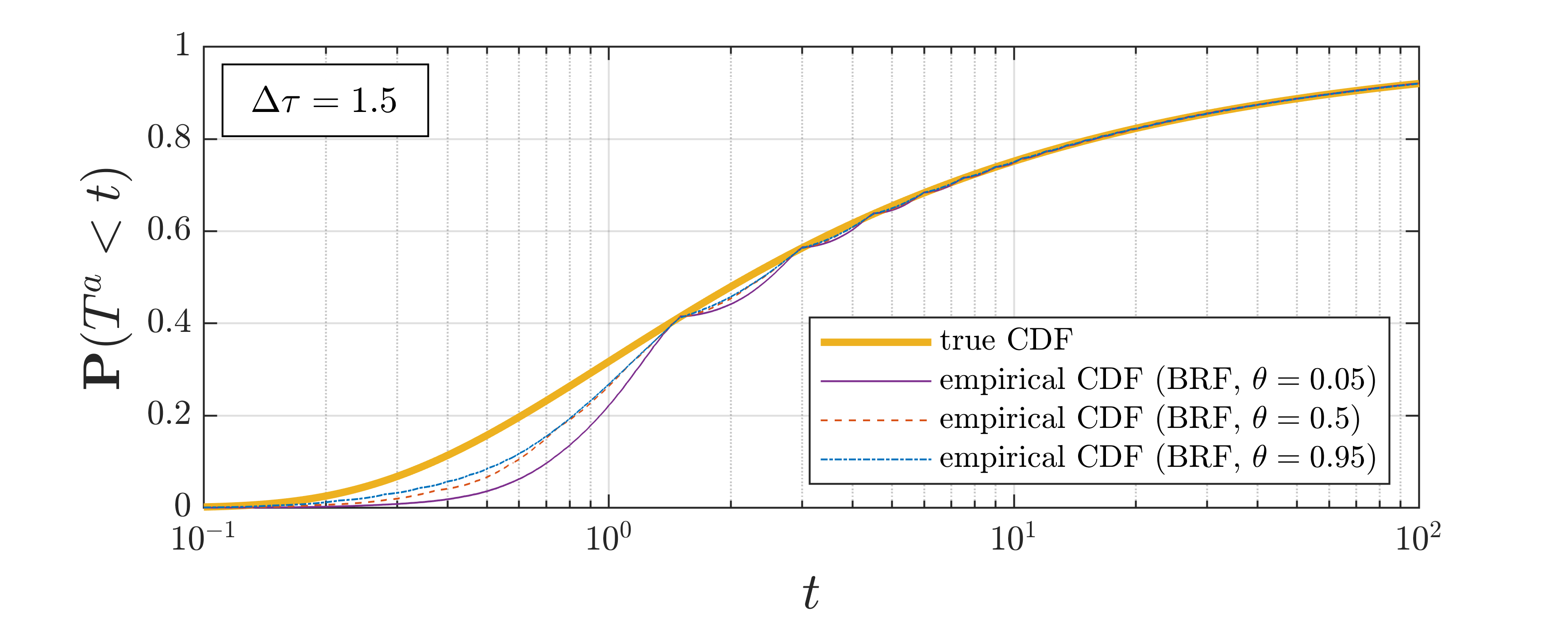}
    \caption{A comparison of Brownian root-finding with different values of $\theta$. One-dimensional Brownian motion is simulated starting at the origin with $2^{17}$ walkers and time-step $\Delta\tau = 1.5$ using the Euler-Maruyama method. Max-sampling is used to determine when a walker has passed a barrier $a = 1$, and then the BRF algorithm is run with three different parameters $\theta$ to obtain empirical exit time distributions. The empirical distribution most closely resembles the true distribution for $\theta$ close to 1, but the computation time is significantly higher for $\theta = 0.95$ (and $\theta = 0.05$) compared to $\theta = 0.5$.}
    \label{fig:BRFtest_theta}
\end{figure}

\begin{algorithm}[!htbp]
  \caption{Brownian root-finding algorithm: provides an estimate for the hitting time and location of a Brownian motion on some boundary $\partial\Omega$.}\label{alg:BRF}
  \begin{algorithmic}[1]
    \Require{Signed distance function $\dO$, consecutive steps of Brownian path $\vec{B}_{\mathrm{old}}$ and $\vec{B}_{\mathrm{new}}$, parameter $\theta \in (0,1)$, tolerance $\epsilon$, functions \ExitCondition, \tEstimate, \xEstimate}
    \Statex
    \Function{BRF}{$\vec{B}_{\mathrm{old}},\vec{B}_{\mathrm{new}},\theta,\epsilon,\dO,\ExitCondition,\tEstimate,\xEstimate$}
    \Let{$\rho_{\textrm{exit}}$}{$|\dO(\vec{B}_0)|$}
    \Let{$\tau_\textrm{min}$}{0}
    \Let{$\tau_\textrm{max}$}{$\Delta \tau$}
    \Let{$\vec{x}_\textrm{min}$}{$\vec{B}_{\mathrm{old}}$}
    \Let{$\vec{x}_\textrm{max}$}{$\vec{B}_{\mathrm{new}}$}

    \While{$\rho_{\textrm{exit}} > \epsilon$}
      \Let{$T$}{$\tau_\textrm{max} - \tau_\textrm{min}$}
      \Let{$\tau_\textrm{exit}$}{$(1-\theta)\tau_\textrm{min} + \theta\tau_\textrm{max}$}
      \Let{$t$}{$\tau_\textrm{exit} - \tau_\textrm{min}$}
      \Let{$\vec{x}_\textrm{exit}$}{$\mathcal{N}\left(\left(1 - \frac{t}{T}\right)\vec{x}_\textrm{min} + \frac{t}{T}\vec{x}_\textrm{max}, \frac{t(T - t)}{T}I_d\right)$}
      \If{$\dO(\vec{x}_\textrm{exit})>0$}
      \Let{$\tau_\textrm{max}$}{$\tau_\textrm{exit}$}
      \Let{$\vec{x}_\textrm{max}$}{$\vec{x}_\textrm{exit}$}
      \ElsIf{$\dO(\vec{x}_\textrm{exit})<0$}
      \If{$\ExitCondition\left(\vec{x}_\textrm{min},\vec{x}_\textrm{exit},t\right)$}
      \State \Return{$\tau_\mathrm{min} + \tEstimate(\vec{x}_\mathrm{min},\vec{x}_\mathrm{exit},t),\xEstimate(\vec{x}_\mathrm{min},\vec{x}_\mathrm{exit})$}
      \Else
      \Let{$\tau_\textrm{min}$}{$\tau_\textrm{exit}$}
      \Let{$\vec{x}_\textrm{min}$}{$\vec{x}_\textrm{exit}$}
      \EndIf
      \EndIf
    \Let{$\rho_{\textrm{exit}}$}{$\left|\dO(\vec{x}_\textrm{exit})\right|$}
    \EndWhile
    \State \Return{$\tau_\textrm{exit}, \vec{x}_\textrm{exit}$}
    \EndFunction
  \end{algorithmic}
\end{algorithm}

To compare Brownian root-finding algorithm to the corrected and na\"{i}ve exit time/location estimates, we once again simulate one-dimensional Brownian motion starting at the origin. We record empirical exit times past the barrier $a = 1$ using na\"{i}ve, corrected, and Brownian root-finding estimates, and compare these to the known distribution. Results obtained using the max-sampling and na\"{i}ve exit conditions for two different time-steps are shown in Fig.~\ref{fig:BRF_figure}. These results show that the BRF algorithm does outperform the corrected estimate, but not by much, since BRF relies on the corrected estimate for the case when $\vec{B}_{\mathrm{old}}, \vec{B}_{\mathrm{new}} \in \Omega$. This suggests that implementing BRF to estimate exit times and locations may not be worth the effort, since decreasing $\Delta\tau$ decreases bias more easily and effectively. Using the na\"{i}ve exit condition, the empirical and true CDFs differ significantly regardless of the exit time/location estimator. This further illustrates the need to modify the exit condition.

\begin{figure}[!htbp]
    \centering
        \includegraphics[width = 1\linewidth]{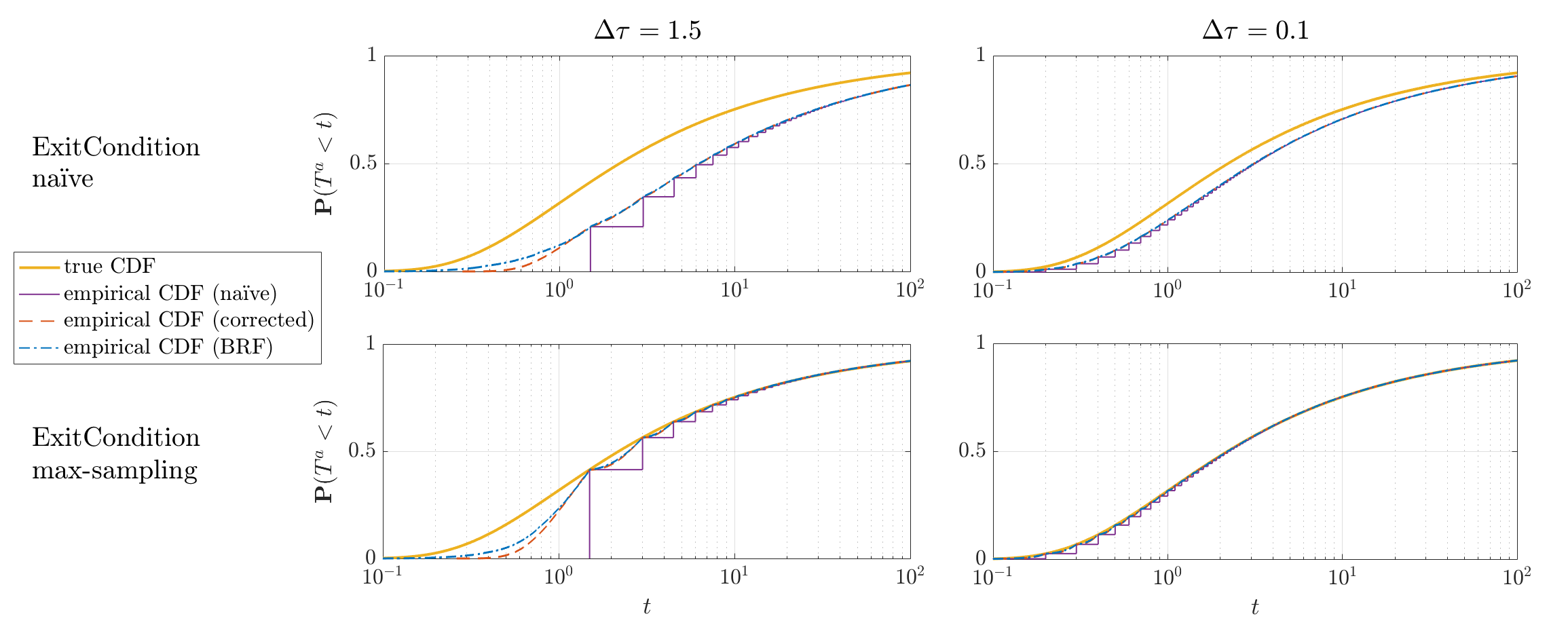}
    
    \caption{Empirical and true CDFs for the first passage time of a one-dimensional Brownian motion. The Brownian motion is simulated starting at the origin with $2^{17}$ walkers using the Euler-Maruyama method. Max-sampling is used to determine when a walker has passed a barrier $a = 1$. Three different algorithms are used to estimate the exit times: na\"{i}ve, corrected, and BRF with $\theta = 0.5$. The BRF algorithm terminates when the exit location estimate is within $\epsilon=0.01$ units of the boundary. When $\vec{B}_{\mathrm{new}} \in \Omega$, the BRF algorithm uses the corrected estimate. For large $\Delta\tau$, BRF mildly outperforms the corrected estimate in approximating the true distribution of exit times, with both vastly outperforming the na\"{i}ve estimate. For smaller $\Delta\tau$, the corrected and BRF estimates are virtually indistinguishable, both outperforming the na\"{i}ve estimate. The empirical overestimation of exit times with the na\"{i}ve exit condition (especially with a large time-step) illustrates the need for a better exit condition.}
    \label{fig:BRF_figure}
\end{figure}

\section{Temporal Difference Learning}
\label{sec:temporal_difference_learning}

In the Monte Carlo paradigm, if estimates of $u$ are desired at two or more different points $\vec{x}$, the method described above does not share information between the points. This is particularly wasteful for solutions to elliptic partial differential equations with their smooth solutions since nearby points will have nearby solution values. To be less wasteful with our samples, we propose a different paradigm to assemble information from the random samples into an estimate for a solution.

Temporal Difference Learning (TDL) is a general learning paradigm to learn the value of functionals of a Markov chain from random samples (See~\cite{Sutton1988} for its first analysis). Unlike the Monte Carlo method, which computes the value of a solution at a single point, TDL successively updates a representation of the solution at all points. 

TDL works by parametrizing the solution with parameters $c_k$ and then updating the parameters. A simple way to parametrize the solution $u$ is as a linear combination of fixed basis functions, $u_k(\vec{x})$, $k\in K$,
\begin{equation}
\label{eqn:basisrep}
\hat{u}(\vec{x}) = \sum_{k\in K} c_k u_k(\vec{x}).
\end{equation}
We look for coefficients $c_k$ so that the resulting linear combination approximates the solution to the PDE.

Note that other parametrizations of solutions $u$ are also possible. It is not necessary that the parameters $c_k$ appear linearly. All that is necessary is that the parametrized representation of the solution is able to accurately approximate within a set of functions containing our desired function. See~\cite{han_nica_stinchcombe} for an implementation of a similar method using artificial neural network parametrizations. In this approach, we trade seeking point values of the solution for determining the parameters $c_k$. This approach is reminiscent of a spectral method based on noisy data. In the machine learning literature, the functions $u_k$ are referred to as \\emph{features}.  One advantage of TDL is that any extra knowledge about the solution can be easily incorporated into the solution in the choice of the basis functions $u_k$. For example, if it is known that the solution is radially symmetric, the basis functions can all be chosen to be radially symmetric to enforce the symmetry.

\begin{algorithm}
  \caption{Temporal-Difference scheme: provides an estimate for the entire $u(\cdot)$. Several variations are possible by varying the $\ExitCondition$, $\fEstimate$ and $\gEstimate$ functions}\label{alg:TDL}
  \begin{algorithmic}[1]
    \Require{Basis function $u_k(\vec{x})$. Functions $\fEstimate,\gEstimate, \ExitCondition$. Initial parameters $\left\{c_k\right\}_{k \in K}$, learning rates $\left\{\alpha_k\right\}_{k\in K}$, number of walkers $N$.}
    \Statex
    \Function{TemporalDifference}{$n, N, \Delta \tau, \left\{c_k\right\}_{k \in K}, \left\{\alpha_k\right\}_{k\in K}$, $\ExitCondition$, $\gEstimate$,  $\fEstimate$}

    \Let{$\vec B_{\mathrm{new}}^{(i)}$}{$\mathrm{Unif}(\Omega)$}
    \Let{$\vec B_{\mathrm{old}}^{(i)}$}{$\vec B_{\mathrm{new}}^{(i)}$}
    \While{NOT ALL $\ExitCondition(\vec B_{\mathrm{old}}^{(i)},\vec B_{\mathrm{new}}^{(i)}, \Delta \tau)$}
        \For{$i \in \{1,2,\ldots,N\} \setminus \ExitCondition(\vec B_{\mathrm{old}}^{(i)},\vec B_{\mathrm{new}}^{(i)}, \Delta \tau)$}
            \Let{$\vec B_{\mathrm{old}}^{(i)}$}{$\vec B_{\mathrm{new}}^{(i)}$}
            \Let{$\vec B_{\mathrm{new}}^{(i)}$}{$\vec B_{\mathrm{old}}^{(i)} + \sqrt{\Delta \tau} \mathcal{N}(\vec{0},I_d)$}
            \For{$k \in K$}
            \Let{$c_k$}{$c_k + \alpha_k[u(\vec B_{\mathrm{new}}^{(i)})-u(\vec B_{\mathrm{old}}^{(i)}) - \frac12 \fEstimate(\vec{B}_{\mathrm{new}}^{(i)},\vec{B}_{\mathrm{old}}^{(i)},\Delta \tau)]u_k(\vec B_{\mathrm{old}}^{(i)})/N$}
            \EndFor
        \EndFor
    \EndWhile
    \For{$i \in \{1,2,\ldots,N\}$}
        \For{$k \in K$}
           \Let{$c_k$}{$c_k + \alpha_k[\gEstimate(\vec{B}_{\mathrm{new}}^{(i)},\vec{B}_{\mathrm{old}}^{(i)},\Delta \tau)-u(\vec B_{\mathrm{old}}^{(i)}) - \frac12\fEstimate(\vec{B}_{\mathrm{new}}^{(i)},\vec{B}_{\mathrm{old}}^{(i)},\Delta \tau) ]u_k(\vec B_{\mathrm{old}}^{(i)})/N$}
        \EndFor
    \EndFor
    \State \Return{$\left\{c_k\right\}_{k \in K}$}
    \EndFunction
  \end{algorithmic}
\end{algorithm}

For concreteness in the discussion below, we will use Cartesian products of Chebyshev polynomials~\citep{trefethen2013approximation} as our basis functions 
$$
u_k(x_1,x_2) = T_{k_1}(x_1) T_{k_2}(x_2),
$$
in which $k_1,k_2$ are the coordinate indices for the linear index $k$. The Chebyshev polynomials are the orthogonal polynomials $T_0(x)=1, T_1(x)=x, T_2(x)=2x^2-1, T_3(x)=4x^3-3x, \ldots $. These polynomials can provide a powerful approximation of functions and are the basis of the widely-used open-source toolbox \textit{chebfun}~\citep{driscoll2014chebfun,chebfun}.

We will employ temporal difference learning to estimate a value function $u(\vec{x})$ by combining our Monte Carlo random walk method with ideas from dynamic programming. The key aspect of a temporal difference method is that the value function is updated on each step of the method rather than only at the conclusion of the learning epoch. For our present problem, we will be able to improve our estimate of $u$ on each $\Delta \tau$ sized time-step and not just when a walker reaches the boundary.

We interpret the Feynman-Kac formula in the context of a Markov reward process. The Brownian motion process collects \emph{rewards} over time and the desired function $u(\vec{x})$ is the value of position $\vec{x}$, the expected long run rewards beginning at position $\vec{x}$. The value of a point on the boundary $\vec{x}\in\partial\Omega$, where the process terminates, is the boundary value $g(\vec{x})$. This value can be estimated as described previously with $\gEstimate$. The total reward collected by the Brownian motion is $\int_0^T f(\vec{B}(t))~\d t$ and the incremental reward over a step $\Delta \tau$ can be estimated as was done previously with $\fEstimate$. Evaluating Eq.~\eqref{eq:FeynmanKac} at $\vec x = \vec B_{\textrm{old}}$ gives
\begin{equation*}
u(\vec{B}_{\mathrm{old}}) = \E \bigg[u(\vec{B}_{\mathrm{new}}) -\frac12 \int_0^{\min(\Delta \tau,T)} f(\vec{B}(t))~\d t \bigg],
\end{equation*}
the Bellman equation for the Markov reward process. The long-run reward of the current position is the expected value of the long-run reward of the next position plus the reward of going from the current position to the next position. If the exit occurs during the time-step, i.e. $T\leq \Delta \tau$, then $u(\vec{B}_{\mathrm{new}})$ is understood to be $g(\vec{x}_\mathrm{exit})$.

A functional that is minimal when Eq.~\eqref{eq:FeynmanKac} is satisfied is
$$
J[u(\cdot)]:= \frac12 \int_\Omega \left( u(\vec x) - \tilde u(\vec x) \right)^2~\d\vec x,
$$
with \emph{target}
$$
\tilde u(\vec x) := \E \bigg[u(\vec{B}(\min(\Delta \tau,T)~|~\vec B(0)=\vec x)) -\frac12 \int_0^{\min(\Delta \tau,T)} f(\vec{B}(t))~\d t\bigg].
$$
Noting that the integrand is zero for $\vec x \in \partial \Omega$ since $T=0$, the functional derivative is
$\frac{\delta J}{\delta u} = u(\vec x) - \tilde u(\vec x).$
We minimize the functional using gradient descent at $\vec{x}=\vec{B}_{\mathrm{old}}$, i.e we update $u(\vec{B}_{\mathrm{old}})$ according to
\begin{equation}
\label{eqn:uupdate}
u(\vec{B}_{\mathrm{old}}) \leftarrow u(\vec{B}_{\mathrm{old}}) - \alpha \left( u(\vec{B}_{\mathrm{old}}) - u(\vec{B}_{\mathrm{new}}) +\frac12\fEstimate(\vec{B}_{\mathrm{old}},\vec{B}_{\mathrm{new}},\Delta \tau)  \right).
\end{equation}
This can be written as a weighted combination of an old value and a target value,
\begin{equation*}
u(\vec{B}_{\mathrm{old}}) \leftarrow (1-\alpha) u(\vec{B}_{\mathrm{old}}) + \alpha \left( u(\vec{B}_{\mathrm{new}}) -\frac12\fEstimate(\vec{B}_{\mathrm{old}},\vec{B}_{\mathrm{new}},\Delta \tau) \right).
\end{equation*}
The numerical parameter $\alpha$ is known as the learning rate and accounts for the relative confidence in the current estimate for $u$ and the newly acquired sample of the reward. In the case $f \equiv 0$, the update is simply averaging nearby values of $u$, which is what we expect for Laplace's equation.

Since the parameters $c_k$ in Eq.~\eqref{eqn:basisrep} appear linearly, updating the values of $u$ corresponds to updating the parameters as
$$
\label{eqn:cupdate}
c_k \leftarrow c_k +\alpha \left( u(\vec{B}_{\mathrm{new}}) - u(\vec{B}_{\mathrm{old}}) -\frac12\fEstimate(\vec{B}_{\mathrm{old}},\vec{B}_{\mathrm{new}},\Delta \tau) \right) u_k(\vec{B}_{\mathrm{old}}).
$$
The factor of $u_k$ is present due to the chain rule and means that the parameters having larger influence on $u$ will incur larger changes in the update. Our TDL approach is detailed in Alg.~\ref{alg:TDL}.

We test the TDL approach for the Dirichlet and Poisson problems with the na\"{i}ve and max-sampling exit conditions. The error of our approximate solutions as well as the convergence of our coefficients is shown in Fig.~\ref{fig:TDL}. The solution to the Poisson problem is in the Chebyshev basis, $\frac14(x_1^2+x_2^2-1) = \frac18(2x_1^2-1) + \frac18(2x_2^2-1)$. We choose only three members of the basis, $\hat{u}=c_{00} + c_{20} T_2(x_1)T_0(x_2)+c_{02}T_0(x_1)T_2(x_2)$, with exact coefficient values of $c_{00}=0$ and $c_{02}=c_{20}=\frac18$. TDL with the na\"{i}ve exit condition converges to a biased approximation ($c_{00}$ does not approach zero), similar to the results in Fig.~\ref{fig:bias}.

For the Dirichlet problem, we use the basis $\hat u = c_0 + c_1 \mathrm{arctan}\left(\frac{2x_2}{1-x_1^2-x_2^2}\right) + c_2T_2(x_1)T_2(x_2)$. The true solution has a discontinuity on the domain boundary, which results in a slowly converging Chebyshev series. We include the inverse tangent function in the basis so that the true solution is exactly represented with known coefficients, $c_0=1/2$, $c_1=1/\pi$, and $c_2=0$. Just as for the Poisson problem, TDL with the na\"{i}ve exit condition converges to a biased approximation ($c_1$ does not approach $1/\pi$).

In both cases, TDL resulted in an error similar to that of the Monte Carlo method but with much less computational work. TDL needed only $2^{14}$ walkers making $2^{10}$ steps~\textemdash~the resulting $2^{24}$ samples of the Brownian motion should be compared with over $2^{40}$ samples needed to obtain the results in Fig.~\ref{fig:bias}.

The efficiency of the TDL approach can be further improved by optimizing the learning rate schedule, which we have not carefully studied. Additionally, TDL, unlike the Monte Carlo method, does not require us to wait for all of the walkers to exit~\textemdash~the distribution for the \emph{last} exit has a long tail. In our method, we uniformly re-initialize any walker that exits the domain and simply terminate the algorithm after a certain number of walkers have exited.

\begin{figure}[!htbp]
    \centering
    \includegraphics[width=0.9\linewidth]{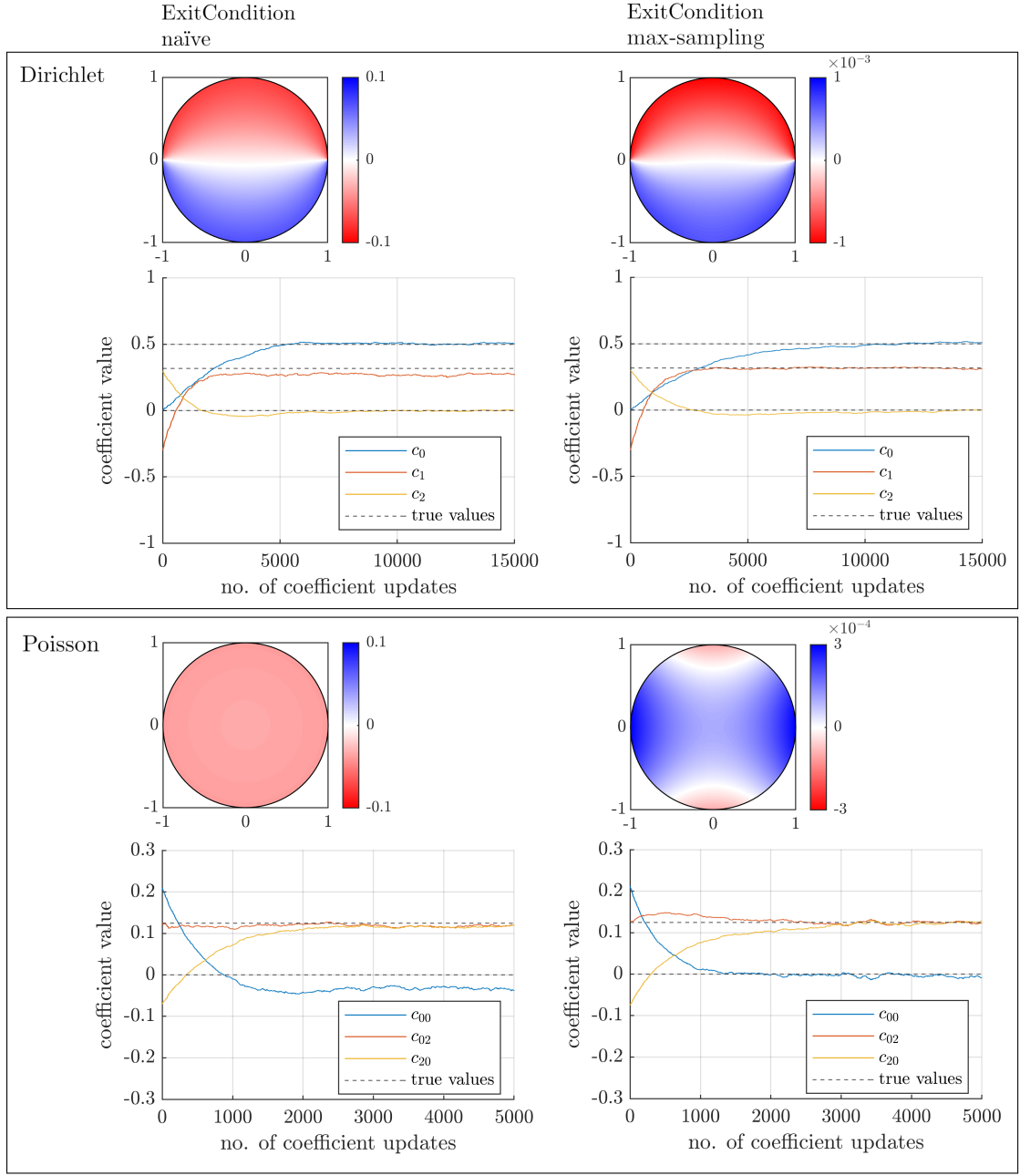}
    \caption{Temporal difference learning results for the Dirichlet and Poisson problems with the na\"{i}ve and max-sampling exit conditions. We use the corrected gEstimate and fEstimate and a time-step of $\Delta\tau = 0.01$ for both cases. The coefficient plots were obtained with $2^4$ walkers and a constant learning rate for each coefficient. For the error contour plots, we 1) use $2^{14}$ walkers, restart exited walkers, and terminate after $2^{14}$ walkers exit; 2) use a different geometrically decreasing learning rate for each coefficient, selected so that each coefficient has a similar variability. The error in all cases is similar to the bias (compare to Fig.~\ref{fig:bias}, but note its larger $\Delta \tau$), which shows that the learning has converged. The error is significantly smaller for both problems when using the max-sampling exit condition.}
    \label{fig:TDL}
\end{figure}

\section{Conclusions}

In this paper, we developed and improved a numerical method for solving elliptic quasilinear PDEs based on sampling Brownian motion. We described the traditional Monte Carlo method, identifying several of its subroutines. These subroutines rely on the accurate estimation of key quantities related to the underlying Brownian motion (e.g. exit time, exit location, local time, etc.). By improving on the na\"{i}ve subroutines, we reduce systematic bias in the Monte Carlo method. We then implemented a reinforcement learning based method to learn the solution to the PDE from sample paths of Brownian motion. We incorporated our improved subroutines into this temporal difference learning framework and showed that they improved the method's accuracy. While our examples demonstrate that our subroutines reduce bias, it should be clear that there are much better methods to solve Poisson's equation on a disk. High-dimensional problems or problems with intricate boundaries could require the Feynman-Kac formula-based approach presented here and could benefit from our bias reduction strategies. A particularly well-suited application would be options pricing with many assets~\citep{firth2005high}. Future work includes combining our sample path discretization techniques and high-performance deep learning methods to obtain competitive numerical methods for solving quasilinear elliptic PDEs.

\section*{Acknowledgements}
We gratefully acknowledge that this research was supported by the Fields Institute for Research in Mathematical Sciences. Its contents are solely the responsibility of the authors and do not necessarily represent the official views of the Institute. We acknowledge the support of the Natural Sciences and Engineering Research Council of Canada (NSERC): RGPIN-2019-06946 for ARS and PDF-502287-2017 for MN.

\bibliographystyle{econ}
\bibliography{bias}

\appendix
\section{Proof of Equation~\eqref{eqn:exit_time_ineq}} \label{A:exit_time_ineq}

We establish the following inequality about the hitting time of a Brownian bridge: Let $0<a<x$. Consider a Brownian bridge with initial position $B(0)=0$ and final position $B(\Delta \tau)=x$. Let $T_a = \inf\{t>0: B(t) > a\}$ be the first time the Brownian bridge hits a barrier at $a$. Then, we have that $\mathbb{E}[T_a]$ obeys the inequality:
\begin{equation}
 \frac{1}{1 +x^{-2}\Delta \tau }  \leq   \frac{\mathbb{E}[T_a]}{\frac{a}{x}\Delta \tau} \leq 1 \label{eqn:bb_ineq}
\end{equation} In our setting, Eq.~\eqref{eqn:exit_time_ineq} follows immediately from this fact by taking the barrier $a = |\dO(\vec B_{\mathrm{old}})|$ and the final position $x = \Delta\rho = \dO(\vec B_{\mathrm{new}})
- \dO(\vec B_{\mathrm{old}})$. 

To prove Eq.~\eqref{eqn:bb_ineq}, we use the probability density of $T_a$ from Eq.~\eqref{eqn:exittimeintensity}, to find that $\mathbb{E}{\left[T_a\right]}$ is given by
\begin{align*}
    \mathbb{E}\left[T_a\right] &= a\int_0^{\Delta\tau}\frac{\sqrt{\Delta \tau}}{\sqrt{2\pi}t^{1/2}(\Delta\tau - t)^{1/2}}\exp\left(\frac{x^2}{2\Delta \tau}-\frac{(a-x)^2}{2(\Delta\tau - t)} - \frac{a^2}{2t}\right)~\d t \\
    & = a \int_0^{\Delta\tau}\rho\left(t,a \right)~\d t  = a\mathbb{E}\left[L_a\right],
\end{align*}
where $\rho\left(t,a \right)$ denotes the probability density of the Brownian bridge to be at $B(t)=a$ at time $t$, 
and $L_a$ is the local time at $a$ of this Brownian bridge. The probability density for this local time has an explicit formula from Equation (3) in~\cite{pitman1999}, namely,
$$\mathbb{P}(L_a > y) = \exp\left(-\frac{1}{2\Delta\tau}\left((|a| + |x-a| + y)^2 - x^2\right)\right).$$
For $0<a<x$, we have $|a|+|x-a|=x$, which yields
$$
    \mathbb{E}\left[T_a\right] = a\int_0^\infty \exp\left(-\frac{1}{2\Delta\tau}\left(2xy + y^2\right)\right)~\d y.
$$

Finally, we can compute by a change of variable that
\begin{align*}
    \frac{\mathbb{E}\left[T_a\right]}{\frac{a}{x}\Delta\tau} &= \frac{x}{\Delta\tau}\int_0^\infty \exp\left(-\frac{1}{2\Delta\tau}\left(2xy + y^2\right)\right)~\d y \\
    & = \sqrt{2\pi}x\exp\left(\frac{x^2}{2\Delta\tau}\right)\int_0^\infty \frac{1}{\sqrt{2\pi}\Delta\tau}\exp\left(-\frac12\left(\frac{y + x}{\Delta\tau}\right)^2\right)~\d y \\
    & = \sqrt{2\pi}\frac{x}{\sqrt{\Delta \tau}}\exp\left(\frac{x^2}{2\Delta\tau}\right)\mathbb{P}\left(X > \frac{x}{\sqrt{\Delta\tau}}\right), \textrm{ where } X\sim \mathcal{N}(0,1).
\end{align*}

The Mill's ratio inequality from Lemma 12.9 in~\citep{mortersperes2012}, which holds for all $c>0$, gives
$$
    \frac{1}{\sqrt{2\pi}}\frac{1}{c+c^{-1}} e^{-c^{2} / 2} \leq \mathbb{P}(X > c) \leq \frac{1}{\sqrt{2\pi}} \frac{1}{c} e^{-c^{2} / 2} \label{eqn:mills}.
$$
This gives the desired result of Eq.~\eqref{eqn:bb_ineq} by setting $c = x/{\sqrt{\Delta \tau}}$.

\end{document}